\newtheorem{theorem}{Theorem}[section]
\newtheorem{teo}[theorem]{Theorem}
\newtheorem{lem}[theorem]{Lemma}
\newtheorem{prop}[theorem]{Proposition}
\newtheorem{corollary}[theorem]{Corollary}
\newtheorem{coro}[theorem]{Corollary}
\theoremstyle{definition}
\newtheorem{definition}[theorem]{Definition}
\newtheorem{defi}[theorem]{Definition}
\newtheorem{notation}[theorem]{Notation}
\newtheorem{remark}[theorem]{Remark}
\newtheorem{ex}[theorem]{Example}
\newtheorem{rem}[theorem]{Remark}
\newtheorem{question}[theorem]{Question}
\DeclareMathOperator{\Cof}{Cof}
\DeclareMathOperator{\Ker}{Ker}
\DeclareMathOperator{\id}{id}
\DeclareMathOperator{\End}{End}
\DeclareMathOperator{\Fun}{Fun}
\DeclareMathOperator{\GL}{GL} 
\DeclareMathOperator{\SL}{SL} 
\DeclareMathOperator{\Sp}{Sp}
\DeclareMathOperator{\SO}{SO}
\DeclareMathOperator{\Or}{O}
\DeclareMathOperator{\Mq}{M}
\DeclareMathOperator{\Aut}{Aut}
\DeclareMathOperator{\ev}{ev}
\DeclareMathOperator{\coev}{coev}
\newcommand{\ot}{\otimes}
\newcommand{\wt}{\widetilde}
\newcommand{\wh}{\widehat}
\newcommand{\bq}{\textbf{q}}
\def\M{\mathcal M}
\def\SS{\mathcal S}
\def\fR{\mathfrak{R}}
\def\b{\mathfrak b}
\def\B{\mathfrak B}
\def\J{\mathfrak J}
\def\Z{\mathbb Z}
\def\s{\mathbb S}
\def\N{\mathbb N}
\def\O{\mathcal O}
\def\om{\omega}
\def\t{\triangleleft}
\def\trid{\triangleright}
\def\mO{\mathcal{O}}
\def\bt{\mathbf{t}}
\def\eps{\varepsilon}
\def\com{\Delta}
\def\Im{\mathrm{Im}}
\def\pf{\begin{proof}}
\def\epf{\end{proof}}
\begin{document}

\author{Marco Andr\'es Farinati
\thanks{Partially supported by CONICET, UBACyT}
\and 
Gast{\'o}n Andr{\'e}s Garc{\'\i}a\thanks{Partially supported by CONICET, ANPCyT, Secyt.\newline
\noindent 2010 \emph{Mathematics Subject Classification:}\,  17B37, 20G42, 16W20.  \newline
   \emph{Keywords:} Quantum function algebras, Nichols algebras, quantum determinants.  
}}

\title{Quantum function algebras from finite-dimensional Nichols algebras}

\date{}

\newcommand{\Addresses}{{
  \bigskip
  \footnotesize
  \noindent \textsc{M.~A.~Farinati
  \newline I.M.A.S. CONICET - Departamento de Matem\'atica,
  \newline  F.C.E.y N., 
Universidad de Buenos Aires 
   \newline
Ciudad Universitaria Pabell\'on I \newline
(1428) Ciudad de Buenos Aires, Argentina}
\par\nopagebreak
\noindent  \textit{E-mail address:} \texttt{mfarinat@dm.uba.ar}

  \medskip

 \noindent \textsc{G.~A.~Garc\'ia 
  \newline CMaLP, Departamento de Matem\'atica, 
   \newline Facultad de Ciencias Exactas
\newline
 Universidad Nacional de La Plata   ---   CONICET
   \newline
 C. C. 172   \, --- \,   1900 La Plata, Argentina}
 \par\nopagebreak
 \noindent  \textit{E-mail address:} \texttt{ggarcia@mate.unlp.edu.ar}
}}

\maketitle

\begin{abstract}
We describe how to find quantum determinants and antipode formulas
 from braided vector spaces 
 using the FRT-construction and finite-dimensional Nichols algebras. It improves the 
 construction of quantum function algebras using quantum grassmanian algebras. 
Given a finite-dimensional Nichols algebra $\B$, our method 
provides a Hopf algebra  $H$ such that $\B$ is a braided Hopf 
algebra in the category of $H$-comodules.
It also serves as source to produce Hopf algebras generated by
 cosemisimple subcoalgebras, which are of interest for the 
generalized lifting method.
 We give several examples, among them quantum function algebras from
 Fomin-Kirillov algebras
 associated with the symmetric group on three letters.
\end{abstract}


\section*{Introduction}
Let $\Bbbk$ be a field and $V$  a finite-dimensional $\Bbbk$-vector space. 
A map $c\in \Aut(V\ot V)$ is called a braiding if it satisfies  
the braid equation
\begin{equation}\label{eq:braideq}
(c\ot \id)(\id\ot c)(c\ot \id)=(\id\ot c)(c\ot \id)(\id\ot c)\qquad \text{ in }\End(V\ot V\ot V).
\end{equation}
In such a case, the pair $(V,c)$ is called a braided vector space.

Given a braided vector space $(V,c)$, Faddeev, Reshetikhin and Takhtajan \cite{FRT} 
introduced a method, \textit{the FRT-construction} 
for short, to
construct a coquasitriangular bialgebra $A(c)$ such that $V$ is an $A(c)$-comodule and $c$ is a morphism of 
$A(c)$-comodules.
By the very definition, this bialgebra is universal with such properties. 
Besides, it turns out that the category ${}^{A(c)}\M$ of left  $A(c)$-comodules
is braided monoidal.
Notice that if $V\neq 0$ then
$A(c)$ is never a Hopf algebra: 
suppose the contrary and write $\SS$ for the antipode. Since
$A(c)=\oplus_{n\in\N_0}A(c)^n$ is graded in non-negative degrees and
generated by comatrix elements $t_i^j$,
\[
1=\epsilon(t_i^i)=\sum_kt_i^k\SS(t_k^i)
\in\bigoplus_{n>0}A(c)^n,\]
one gets a contradiction.
In the trivial 
example given by $\tau$= the flip map in $\Bbbk^n$, the FRT-construction
yields the coordinate affine ring $\O(\Mq_n)$ on $n\times n$ matrices over $\Bbbk$, 
and one needs to localize the commutative algebra on the determinant in order to 
obtain the Hopf algebra
$\O(\GL_n)$. In general, the {\em abelianization} of the FRT-construction
gives the bialgebra $\O(\End(c))$, which is the ring of coordinate functions on the endomorphisms 
$f$ of $V$ such that $f\ot f$ commutes with $c$. If one could localize the FRT-construction
and get a Hopf algebra $H(c)$, then one would have a surjective map
$H(c)\twoheadrightarrow \O(\Aut(c))$, that is, a quantum group
 much larger than
the "classical" automorphism group of the braiding.
In general, by 
\cite[Lemma 3.2.9]{Sch} (see \cite{T1}
for a review) in case the braiding $c$ is rigid there exists a coquasitriangular Hopf algebra 
$\mathcal{H}(c)$ associated with $(V,c)$ satisfying a universal property: 
$V \in {}^{\mathcal{H}(c)}\M$ with a certain comodule structure map $\lambda$ and  
if $B$ is a coquasitriangular bialgebra such that $V \in {}^{B}\M$ with comodule structure 
$\lambda_{B}:V \to B\ot V$, then there exists a coquasitriangular bialgebra map 
$f:\mathcal{H}(c) \to B$ such that $\lambda_{B}= (f\ot \id)\lambda$.
Furthermore, 
by \cite[Lemma 3.2.11]{Sch}, the 
Hopf algebra $\mathcal{H}(c)$ is generated as algebra by elements 
$\{\frak{t}_{i}^{j},\ \frak{u}_{i}^{j}\}_{1\leq i,j \leq n}$ satisfying 
\begin{equation}\label{eq:relationsPeter}
\sum_{k,\ell}c_{ij}^{k\ell}\frak{t}_k^r \frak{t}_\ell^s=
\sum_{k,\ell}\frak{t}_i^k \frak{t}_ j^\ell c_{k\ell}^{rs},\qquad \text{ and }\qquad 
\sum_{k=1}^{n} \frak{u}_{i}^{k} \frak{t}_{k}^{j} = \delta_{i}^{j}=  \sum_{k=1}^{n} \frak{t}_{i}^{k} \frak{u}_{k}^{j}. 
\end{equation} 
The coalgebra structure is given by $\com(\frak{t}_{i}^{j}) = \sum_{k=1}^{n} \frak{t}_{i}^{k} \ot \frak{t}_{k}^{j}$,
$\eps(\frak{t}_{i}^{j}) = \delta_{i}^{j}$ and $\com(\frak{u}_{i}^{j})= \sum_{k=1}^{n} \frak{u}_{k}^{j} \ot \frak{u}_{i}^{k}$, 
$\eps(\frak{u}_{i}^{j})=\delta_{i}^{j}$. Moreover, 
one has that $\SS_{\mathcal{H}(c)}(\frak{t}_{i}^{j}) = \frak{u}_{i}^{j}$ for all $1\leq i,j\leq n$.
Note that, since $\mathcal{H}(c)$ is coquasitriangular, the square of the antipode is an inner automorphism,
and as a consequence, the antipode and all its powers are defined on the generators $\frak{t}_{i}^{j}$, $\frak{u}_{i}^{j}$. 
The  comodule category ${}^{\mathcal{H}(c)}\M$ is the one generated by $V$ and $V^*$, and in general
the map $A(c)\to \mathcal{H}(c)$ needs not to be injective (see example in Subsection
\ref{EjemploPeter}). 

In this paper we consider  the following 3-step problem: given a finite-dimensional 
rigid braided vector space $(V,c)$,
\begin{enumerate}
 \item[$(a)$] find a ''quantum determinant" for the FRT-construction $A(c)$,
 \item[$(b)$] prove that
the localization $H(c)  = A(c)[D^{-1}]$ of $A(c)$ at the quantum determinant is a Hopf algebra,
\item[$(c)$] prove that $H(c)  \simeq \mathcal{H}(c)$.
\end{enumerate}
In Subsection \ref{subsec:quantumdeterminant}, we introduce a method for finding a 
quantum determinant 
associated with a rigid solution of the braid equation.
Two of our main results, Theorem \ref{teomain} and Theorem \ref{teomain-new},
give sufficient conditions to ensure the existence, and a concrete way to compute it,
of a group-like element $D\in A=A(c)$, such that $D$ is normal
 in $A$ and, under certain conditions, the localization on $D$ is a Hopf algebra $H(c)$.
 Moreover, our proof yields 
an explicit formula for the antipode.
 Finally, we show in Corollary \ref{cor:H=HPeter} that 
 $H(c)$ is isomorphic to the universal coquasitriangular Hopf algebra 
$\mathcal{H}(c)$ associated with $(V,c)$. 
In this way, we obtain a realization of $\mathcal{H}(c)$ as a localization of $A(c)$.

As a classical motivation of this problem one can mention the famous work of Y. Manin \cite{M}, see also \cite{M2},
where the author introduces two operations 
$\bullet$ and $\circ$ on quadratic algebras, interpreted as  internal tensor products,
and proves that the internal end$(A)=A^!\bullet A$ of a quadratic algebra $A$
is always a bialgebra, recovering some remarkable examples such as the quantum function algebra $\Mq_q(2)$.
The problem of finding quantum determinant is present in this work, introducing 
what Manin calls a \textit{quantum grasmannian algebra} (qga)  in \cite{M}, or a \textit{Frobenius quantum space} (Fqs) 
in \cite{M2}, where a "volume form" 
 plays a crucial role.
The definition of a qga, or a Fqs, assures the existence of a group-like element that is the natural
candidate for a quantum determinant, but the problem of
finding the antipode (or even to prove its existence) remains open.

In \cite{H}, Hayashi constructed quantum determinants for multiparametric 
quantum deformations of $\O(\SL_{n})$, $\O(\GL_{n})$, $\O(\SO_{n})$, $\O(\Or_{n})$ and $\O(\Sp_{2n})$, inverting
{\em all} group-like elements in a given quasitriangular bialgebra, and showing that the
ending result is a Hopf algebra. To define the quantum determinants, qga's are considered
for the deformations of the
classical examples.
The idea of considering 
\textit{quantum exterior algebras (qea)} is also present in the work 
of Fiore \cite{F}, where
 the author defines quantum determinants for the quantum function algebras 
$\SO_{q}(N)$, $\Or_{q}(N)$, and $\Sp_{q}(N)$,
 which are defined through (a quotient of) the FRT-construction, by means of the
 coaction of these on a volume element. This is where the quantum determinant 
comes into (co)action.
 More generally, qea's and quantum determinants appear in the work of Etingof, Schedler 
and Soloviev \cite{ESS} 
 as universal objects associated with the exterior algebra when considering set-theoretical
 (involutive) solutions to QYBE's. All 
 quantum determinants appearing in this way should be central.
 Nevertheless, we found an example 
 that this might not be the case, see Subsection \ref{ex:involutive-non-central}.
 
Motivated by the results in \cite{ESS}, the definition of the qga and the 
quantum exterior algebras, and properties of  
the {\em Nichols algebra} associated with a rigid solution of the braid equation,
in these notes we introduce certain class of graded connected algebras extending Manin's definition of Fqs, see 
Definition \ref{def:finite-Nichols-type}, 
that enable us not only to consider volume elements and 
prove the existence of a quantum
 determinant, but also to find an explicit formula for the natural candidate 
of the
{\em antipode} in the FRT-construction, localized at the quantum determinant.

These qga's defined 
by Hayashi and the quantum exterior algebras considered by Fiore are all quadratic. In general,
for a given braiding,  there is no quadratic qga, but still there might be a 
finite-dimensional Nichols algebra associated with it. 
As a consequence, our method still apply in this case, see example in
Subsection \ref{ejchino}.

Quantum determinants are intensively studied in the literature as
 the classical problem of
  defining the determinant of a matrix with non-commutative entries, and because
 they also give a way to construct 
 new examples of quantum groups, 
 see for example \cite{M2}, \cite{KL}, \cite{PW}, \cite{ER}, \cite{CWW}, \cite{JZ}, \cite{JoZ}, \cite{KKZ} and references therein.
 It is worth to mention that in the work on quantum determinants by Etingof and
Retakh \cite{ER}, the existence of formulas with
"quantum minors" is considered. In our approach, the 
existence and concrete formulas for these "minors" emerge clearly.

Another features of the procedure to find quantum determinants are the following: 
given a finite-dimensional Nichols algebra $\B$, the method provides a Hopf algebra  
$H$ such that $\B$ is a braided Hopf algebra in the braided category of left $H$-comodules. 
It also gives families of
Hopf algebras generated  by simple subcoalgebras. Finite-dimensional quotients of these kind of Hopf algebras
are of interest in the classification program of finite-dimensional complex Hopf algebras by means of the 
generalized Lifting Method, see for example \cite{AC}, \cite{GJG}.

The paper is organized as follows. 
In Section \ref{sec:ABC} we recall the FRT-construction and 
the definition of the Nichols algebra associated with a
braided vector space. In Section \ref{sec:quantumdeterminant}, we introduce the method for
 finding quantum determinants and "quantum cofactor formulas", proving
 our main results Theorems \ref{teomain}, \ref{teomain-new} and Corollary \ref{cor:H=HPeter}. Finally, we 
illustrate our contribution with several examples, including cases where
the determinant is not central, and 
 quantum function algebras from Fomin-Kirillov algebras
 associated with the symmetric groups on three letters.

\subsection*{Acknowledgments}
We want to thank Peter Schauenburg for answering 
all our questions, 
together with the clearest example in each case, 
and Chelsea Walton for many hints and suggestions.
We also thank the referee for the careful reading of our manuscript,
and for the comments and suggestions that helped us to improve the presentation. In particular,
the argument used in the introduction to show that $A(c)$
is never a Hopf algebra is due to her/him.

\section{Preliminaries: the $A$ $\B$ $c$}\label{sec:ABC}
In this section we give the definitions and basic properties of the FRT-construction and Nichols 
algebras, and recall known results that are needed for our construction.

Throughout the notes,  $\Bbbk$ denotes an arbitrary field. We use 
the standard conventions for Hopf algebras and write $\Delta$, $\varepsilon$
and $\SS$ for the coproduct, counit and antipode, respectively. We also use Sweedler's notation $\Delta(h)=h_{(1)}\ot
h_{(2)}$ for the comultiplication. 
Given a bialgebra $A$, the category of finite-dimensional left $A$-comodules is
 denoted by $\ ^{A}\M$.
The readers are referred to \cite{Ra} for further details on
the basic definitions of Hopf algebras.

\subsection{The FRT-construction: $A(c)$ \label{sectionFRT}}
In this subsection, we follow \cite{LR}.
Let $(V,c)$ be a finite-dimensional braided vector space and fix
$\{x_i\}_{i=1}^n$ a basis of $V$. 
Write $\{x_{i}^{*}\}_{i=1}^{n}$ for the basis of $V^{*}$ dual to $\{x_{i}\}_{i=1}^{n}$. 
Recall that a solution of the
braid equation 
$c$ is \textit{rigid},
if the map $c^{\flat} : V^{*} \ot V \to V \ot V^{*}$ given by  $c^{\flat}(f\ot x) = \sum_{i=1}^{n} (\ev\ot\id\ot\id)(f \ot c(x\ot x_{i})\ot x_{i}^{*})$ is invertible.

Let $C=\End(V)^*$ be the coalgebra linearly spanned 
by the matrix coefficients $\{t_i^j\}_{1\leq i,j\leq n}$. Then,
$V$ has a natural left $C$-comodule structure. Note that, as $C\cong M_n(\Bbbk)^*\cong 
V\ot V^*$ these generators are induced by the basis $\{x_i\}_{i=1}^{n}$, 
via the correspondence $t_i^j\leftrightarrow x_i\ot x^*_j$.
The coalgebra structure is given by
\begin{equation}\label{eq:coalgstrucC}
\Delta(t_i^j)=\sum_{k=1}^nt_i^k\ot t_k^j,\qquad \eps(t_{i}^{j})=\delta_{ij}\qquad\text{ for all }1\leq i,j\leq n,
 \end{equation}
and $V$ is a (left) $C$-comodule 
by setting
\[
\lambda (x_i)=\sum_{j=1}^{n}t_i^j\ot x_j\qquad \text{ for all }1\leq i \leq n.
\]
Write $TC$ for the tensor algebra of $C$. Extending as algebra maps the comultiplication and the counit of $C$ to $TC$, the latter
becomes a bialgebra and 
$V\ot V$ is a (left) $TC$-comodule. In general, 
a linear map $c:V\ot V\to V\ot V $ is not necessarily $TC$-colinear. Actually,
if one consider the difference of the two possible compositions in the following diagram,
computed in the basis $\{x_i\ot x_j\}_{i,j}$, one gets
$$
\xymatrix{
x_i\ot x_j\ar@{|->}[d]\ar@/^3ex/@{|->}[rrr]&V\ot V\ar[r]^c\ar[d]^{\lambda}& V\ot V\ar[d]^{\lambda}&\sum_{k,\ell}c_{ij}^{k\ell}x_k\ot x_\ell\ar@{|->}[d]\\
\sum_{k,\ell}t_i^kt_j^\ell\ot x_k\ot x_\ell
\ar@/_3ex/@{|->}[drrr]
&TC\ot (V\ot V)\ar[r]^{\id\ot c}& TC\ot (V\ot V)&
\sum_{r,s,k,\ell}c_{ij}^{k\ell}\ t_k^rt_\ell^s\ot x_r\ot x_s \\
&&&\sum_{k,\ell,r,s}t_i^kt_ j^\ell c_{k\ell}^{rs}\ot x_r\ot x_s
} $$
where the coefficients $c_{ij}^{k\ell}$ are defined by the 
equality $c(x_i\ot x_j)=\sum_{kl}c_{ij}^{k\ell}x_k\ot x_\ell$.
Hence, one arrives naturally at the following definition:

\begin{definition}\cite{FRT}
The FRT-construction (or universal quantum semigroup) 
for $(V,c)$ is the $\Bbbk$-algebra $A=A(c)$ generated by the elements $\{t_i^j\}_{1\leq i,j\leq n}$,
satisfying the following relations:
\begin{equation}\label{eq:FRT}
\fbox{\fbox{$
\sum_{k,\ell}c_{ij}^{k\ell}t_k^rt_ \ell^s=
\sum_{k,\ell}t_i^kt_ j^\ell c_{k\ell}^{rs}
$}}\hskip 1cm \forall\ 1\leq i,j,r,s\leq n.
 \end{equation}
 \end{definition}
It is well-known that $A(c)$ is a bialgebra with comultiplication and counit determined by \eqref{eq:coalgstrucC}, which 
satisfies
a universal property:
the map $\lambda: V\to A(c) \ot V$ equips $V$ with the structure of a left 
comodule over $A(c)$
such that 
the map $c$ becomes a comodule map.
If $\mathcal{A}$ is another bialgebra coacting on $V$ via a linear map $\lambda'$ such
that $c$ is $\mathcal{A}$-colinear, then there exists a unique bialgebra morphism 
$f : A(c) \to \mathcal{A}$ such that
$\lambda' = (f\ot \id_{V})\lambda$.

\begin{rem}\label{lem:lambda-colineal} 
Let $V$ be a finite-dimensional $\Bbbk$-vector space, 
$c \in \End(V\ot V)$ and  $A=A(c)$.
For $n\geq 2$, the linear map given by
$
c_k:=\id_{V^{\ot k-1}}\ot c\ot\id_{V^{n-k-1}}:V^{\ot n}\to V^{\ot n}
$
is $A$-colinear. That is, the comodule map
$\lambda:V^{\ot n}\to A\ot V^{\ot n}$ satisfies that
\[
\lambda c_k=(\id_A\ot c_k)\lambda\qquad \text{ for all }1\leq k\leq n-1.\]
\end{rem}
\begin{proof}
This follows from the fact that $c$ is $A$-colinear and the category of 
$A$-como\-dules is tensorial.
\end{proof}

It is well-known that if $c$ satisfies the braid equation, then $A=A(c)$ is a
coquasitriangular bialgebra, that is, there exists a convolution-invertible bilinear map
$r:A\times A\to  \Bbbk$ satisfying

\[
\begin{array}{crcl}
(CQT1)\qquad&r(ab,c)&=&r(a,c_{(1)})r(b,c_{(2)})
\\
(CQT2)\qquad&r(a,bc)&=&r(a_{(2)},b)r(a_{(1)},c)
\\
(CQT3)\qquad&r(a_{(1)},b_{(1)})a_{(2)}b_{(2)}&=&b_{(1)}a_{(1)}r(a_{(2)},b_{(2)})
\end{array}
\]
This map is uniquely determined by
\[
r( t_i^k,t_j^\ell)=
c_{ji}^{k\ell}\qquad \text{ for all }1\leq i,j,k,\ell\leq n.
\]

\begin{rem}
\label{rmk:D-coquasi}
\begin{enumerate}
\item[$(a)$] The first two conditions
say that for any group-like element $D$,
the maps $r(D,-)$, $r(-,D): A\to \Bbbk$ are algebra maps. 
\item[$(b)$] The
last condition can be express by the equality $r*m=m^{op}*r$. Moreover, on   
$a=t_j^r$ and $b=t_ i^s$, it reads
\[
\sum_{k,\ell}r(t_j^k, t_i^\ell) t_k^rt_\ell^s
=
\sum_{k,\ell} t_i^\ell t_j^kr( t_k^r,t_\ell^s),
\]
that is,
\[
\sum_{k,\ell}c_{ji}^{k\ell} t_k^rt_\ell^s
=
\sum_{k,\ell} t_i^\ell t_j^kc_{lk}^{rs}.
\]
Conditions $(CQT1)$ and $(CQT2)$ say that $r$ is determined by the values of 
$r$ on generators, so it extends to the tensor algebra; $(CQT3)$ says that $r$ descends to $A$.

\item[$(c)$]
For a group-like element $D$, 
 $(CQT3)$ gives a commutation rule:
\[
r(D,b_{(1)})Db_{(2)}=b_{(1)}D\ r(D,b_{(2)})\qquad \text{ for all }b\in A\]

\item[$(d)$] The category $^{A}\M$ is braided with $c_{M,N}:M\ot N \to N \ot M$ given by 
$$c(m\ot n) = r(m_{(-1)}, n_{(-1)})\ n_{(0)} \ot m_{(0)}\qquad \text{ for all }M, N \in \ ^{A}\M.$$
\end{enumerate}
\end{rem}
A stronger result than the commutation rule above is due to Hayashi 
and holds for any coquasitriangular bialgebra.

\begin{lem}\label{lemaHayashi}
\cite[Theorem 2.2]{H} Let $A$ be a coquasitriangular bialgebra.
For any group-like element $g\in A$, there is
a bialgebra automorphism $\mathfrak J_{g}:A\to A$ given by 
$
\J_{g}(a)=r(a_{(1)},g)a_{(2)}r^{-1}(a_{(3)},g)$
such that
\[ga=\J_{g}(a)g\qquad \text{ for all } a\in A.\]
\end{lem}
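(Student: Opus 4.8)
The plan is to verify the two assertions of Lemma~\ref{lemaHayashi} directly from the axioms $(CQT1)$--$(CQT3)$, treating $g$ as a fixed group-like element so that $\Delta(g)=g\ot g$ and $\eps(g)=1$. First I would check that $\J_g$ is a coalgebra map: applying $\Delta$ to $\J_g(a)=r(a_{(1)},g)a_{(2)}r^{-1}(a_{(3)},g)$ and using coassociativity together with the fact that the scalars $r(-,g)$ and $r^{-1}(-,g)$ pull through the tensor factors, one gets $\Delta\J_g(a)=\J_g(a_{(1)})\ot\J_g(a_{(2)})$ after the middle terms $r^{-1}(a_{(3)},g)r(a_{(4)},g)$ collapse to $\eps(a_{(3)})$ by the defining property of convolution inverse; counitality $\eps\J_g=\eps$ is immediate. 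Next I would show $\J_g$ is an algebra map: by Remark~\ref{rmk:D-coquasi}$(a)$ both $r(-,g)$ and $r^{-1}(-,g)=r(\SS_{?}\text{-free version})$ are algebra maps on $A$ (the inverse of an algebra map into the commutative algebra $\Bbbk$ under convolution is again an algebra map), so $\J_g(ab)=r(a_{(1)}b_{(1)},g)a_{(2)}b_{(2)}r^{-1}(a_{(3)}b_{(3)},g)$ factors as $r(a_{(1)},g)r(b_{(1)},g)a_{(2)}b_{(2)}r^{-1}(b_{(3)},g)r^{-1}(a_{(3)},g)$; the middle scalars $r(b_{(1)},g)$ and $r^{-1}(b_{(3)},g)$ can be brought to surround $b_{(2)}$, and this equals $\J_g(a)\J_g(b)$. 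It is clearly bijective, with inverse $a\mapsto r^{-1}(a_{(1)},g)a_{(2)}r(a_{(3)},g)$, again by the convolution-inverse identity.

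The heart of the argument, and the step I expect to be the main obstacle, is the commutation relation $ga=\J_g(a)g$ for all $a\in A$. Here I would start from $(CQT3)$ in the form $r(a_{(1)},b_{(1)})a_{(2)}b_{(2)}=b_{(1)}a_{(1)}r(a_{(2)},b_{(2)})$ and specialize $b=g$. Since $g$ is group-like this reads $r(a_{(1)},g)a_{(2)}g=g\,a_{(1)}\,r(a_{(2)},g)$. This is \emph{not} yet the claim; one must "move" the factor $r(a_{(1)},g)$ from the left to the right of $a_{(2)}g$. To do so I would multiply both sides on the right by the element obtained from Remark~\ref{rmk:D-coquasi}$(c)$, or more cleanly: rewrite the specialized $(CQT3)$ using Sweedler indices on a threefold coproduct and convolve with $r^{-1}(-,g)$ on an appropriate leg. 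Concretely, from $r(a_{(1)},g)a_{(2)}g=g a_{(1)}r(a_{(2)},g)$, replace $a$ by $a$ with $\Delta^{2}(a)=a_{(1)}\ot a_{(2)}\ot a_{(3)}$, apply the identity to the middle, and multiply the outer legs by $r^{-1}(a_{(3)},g)$ on the right: the left side becomes $r(a_{(1)},g)a_{(2)}g\,r^{-1}(a_{(3)},g)$, which since $g\,r^{-1}(a_{(3)},g)=r^{-1}(a_{(3)},g)g$ (scalars are central) equals $r(a_{(1)},g)a_{(2)}r^{-1}(a_{(3)},g)\,g=\J_g(a)g$; the right side becomes $g a_{(1)}r(a_{(2)},g)r^{-1}(a_{(3)},g)=g a_{(1)}\eps(a_{(2)})=ga$. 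This gives $\J_g(a)g=ga$, as required.

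Finally I would note that $\J_g$ being simultaneously an algebra and coalgebra map makes it a bialgebra endomorphism, and its bijectivity (exhibited above) upgrades this to an automorphism, completing the proof. The only genuinely delicate point is keeping the Sweedler bookkeeping straight when passing from $(CQT3)$ to the commutation rule — in particular making sure that the convolution-inverse legs are attached on the correct side and that no axiom beyond $(CQT1)$--$(CQT3)$ and group-likeness of $g$ is used. Since the statement is quoted from \cite[Theorem~2.2]{H}, I would also remark that in the present setting $A=A(c)$ is indeed coquasitriangular (as recalled above), so the lemma applies verbatim, and it is this automorphism $\J_g$ that will later control the commutation of the quantum determinant with the generators $t_i^j$.
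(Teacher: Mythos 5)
Your proof is correct. Note that the paper itself offers no argument here --- it states the lemma as a citation of Hayashi's Theorem~2.2 --- so your direct verification from $(CQT1)$--$(CQT3)$ is exactly the kind of self-contained argument one would want, and it is essentially the standard (Hayashi's) one. All the key steps check out: the collapse $r^{-1}(a_{(2)},g)\,r(a_{(3)},g)=\eps(a_{(2)})$ gives comultiplicativity; the fact that the convolution inverse of an algebra map $A\to\Bbbk$ is again an algebra map (which uses that $m:A\ot A\to A$ is a coalgebra map, i.e.\ that $A$ is a bialgebra, plus commutativity of $\Bbbk$) gives multiplicativity; and convolving the specialized $(CQT3)$ identity $r(a_{(1)},g)a_{(2)}g=g\,a_{(1)}r(a_{(2)},g)$ on the last Sweedler leg with $r^{-1}(-,g)$ yields $\J_g(a)g=ga$. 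Two cosmetic points: the aside ``$r^{-1}(-,g)=r(\SS_?\text{-free version})$'' is garbled and should be deleted (the parenthetical justification that follows it is the correct one), and you implicitly use the normalization $r(1,g)=1=r(g,1)$, which is part of the standard coquasitriangularity axioms even though the paper lists only $(CQT1)$--$(CQT3)$; it is worth saying so explicitly.
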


\begin{ex}
Let  $X$ be a set and $s:X\times X \to X\times X$ a set-theoretical solution
of the braid equation, that is $s$ satisfies
\[
(s\times \id_X)(\id_X\times s)
(s\times \id_X)=(\id_X\times s)
(s\times \id_X)(\id_X\times s).
\]
For $x,y, a,b \in X$, let $z,t,u,v \in X$ be
such that 
$(z,t)=s(x,y)$, and $s(u,v)=(a,b)$. 
Let $V=\Bbbk X$ be the $\Bbbk$-vector space linearly spanned by the elements of $X$ and let $c$ be the 
linearization of $s$.
Then, the set of equations for the corresponding 
FRT-construction on $(V,c)$ is
\[
\fbox{$t_x^ut_y^v=t_z^at_t^b$}
\]
In particular, for the flip solution $\tau(x,y)=(y,x)$ on a finite set $X=\{x_{1},\ldots, x_{n}\}$,
we have that $t_x^bt_y^a=t_y^at_x^b$; in other words,
$
A(\tau)=\O (\Mq_n)$.
This is  not a Hopf algebra, but if one consider the element in $A$ given by the 
usual determinant
\[
D:={\det}_n
=\sum_{\sigma\in \s_n}(-1)^{\ell(\sigma)} t_{\sigma (1)}^1\cdots t_{\sigma (n)}^n,
\] 
then the localization on $D$ is the Hopf algebra $A(\tau)[D^{-1}]=\O(\GL_n)$. 
We will generalize this construction for nontrivial
examples.
\end{ex}

\begin{rem} Note that, since \eqref{eq:FRT} is homogeneous, 
 $A(c)=A(qc)$ for all $0\neq q\in \Bbbk$. Also, if $c$ is invertible, then $A(c)=A(c^{-1})$.
\end{rem}

\subsection{Nichols algebras: $\B$}
Let $(V,c)$ be
a braided vector space.
The braid group  
\[
\mathbb{B}_n=\langle \tau_{1},\ldots, \tau_{n-1}|\ \tau_{i}\tau_{j}=\tau_{j}\tau_{i}, \
\tau_{i+1}\tau_{i}\tau_{i+1}=\tau_{i}\tau_{i+1}\tau_{i}, 
\text{ for }
1\leq i\leq n-2\text{ and }j\neq i\pm1\rangle
\]
acts
on $V^{\ot n}$ via $\rho_{n}:\mathbb{B}_{n} \to \GL(V^{\ot n})$ with $\rho_{n}(\tau_{i})=c_{i} 
=\id_{V^{\ot i-1}}\ot c\ot\id_{V^{n-i-1}}:V^{\ot n}\to V^{\ot n} $. Using the
Matsumoto
(set-theoretical) section from the symmetric group $\mathbb{S}_{n}$ to $\mathbb{B}_n$:
\[
M:\mathbb{S}_n\to \mathbb{B}_n,\qquad
(i,i+1)\mapsto \tau_i,\qquad \text{for all }1\leq i\leq n-1,\]
one can define the quantum symmetrizer
$QS_n:V^{\ot n}\to V^{\ot n}
$ by
\[
QS_{n} = \sum_{\sigma \in \s_{n}}\rho_{n}(M (\sigma)) \in \End(V^{\ot n} ).
\]
For example $QS_{2} = \id + c$, and 
\[
QS_{3}=\id+c\ot \id+\id\ot c+(\id\ot c)(c\ot \id)+(c\ot \id)(\id\ot c)+(c\ot \id)(\id\ot c)(c\ot \id).
\]

The Nichols algebra associated with $(V,c)$
is the quotient of the tensor algebra $TV$  by the homogeneous ideal
\[
\mathcal{J}=\bigoplus_{n\geq 2 } \Ker QS_{n},
\]
 or equivalently,
$\B(V,c):=\oplus_n\Im(Q{S}_n)$. In particular,
$\B(V,c)$ is a graded algebra.
Note that $\B^0(V,c)=\Bbbk$,
$\B^ 1(V,c)=V$ and $\B^2(V,c)=(V\ot V)/(\Ker (\id + c))$.

There are several equivalent definitions of the Nichols algebra associated with 
$(V,c)$, 
each of them 
particularly useful for different purposes.
For more details, see \cite{A}.

\begin{prop}\label{prop:B-A-comodulo}
The Nichols algebra  $\B(V,c)$ is an $A(c)$-comodule algebra.
\end{prop}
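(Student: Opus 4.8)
The plan is to show that the ideal $\mathcal{J} = \bigoplus_{n\geq 2}\Ker QS_n$ defining $\B(V,c)$ is a subcomodule of $TV$, so that the quotient algebra $\B(V,c) = TV/\mathcal{J}$ inherits the $A(c)$-comodule algebra structure. First I would recall that $TV$ is an $A(c)$-comodule algebra: the comodule structure $\lambda\colon V\to A(c)\ot V$ extends multiplicatively to $\lambda\colon TV\to A(c)\ot TV$, and since $A(c)$ is a bialgebra and the coaction is an algebra map, $TV$ is a comodule algebra. Concretely, on $V^{\ot n}$ the coaction sends a basis element $x_{i_1}\ot\cdots\ot x_{i_n}$ to $\sum t_{i_1}^{k_1}\cdots t_{i_n}^{k_n}\ot x_{k_1}\ot\cdots\ot x_{k_n}$. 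Since the defining relations \eqref{eq:FRT} of $A(c)$ are exactly what is needed for $c\colon V\ot V\to V\ot V$ to be $A(c)$-colinear, and by Remark \ref{lem:lambda-colineal} each $c_k\colon V^{\ot n}\to V^{\ot n}$ is $A(c)$-colinear as well.

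The key step is then: the quantum symmetrizer $QS_n\colon V^{\ot n}\to V^{\ot n}$ is $A(c)$-colinear. This follows because $QS_n = \sum_{\sigma\in\s_n}\rho_n(M(\sigma))$ is a sum of compositions of the maps $c_k$, each of which is $A(c)$-colinear by Remark \ref{lem:lambda-colineal}; colinear maps are closed under composition and linear combination, so $QS_n$ is colinear. Consequently $\Ker QS_n$ is an $A(c)$-subcomodule of $V^{\ot n}$ (the kernel of a comodule map is always a subcomodule, since if $\lambda(v) = \sum a_i\ot v_i$ with the $v_i$ linearly independent and $QS_n(v)=0$, then $0 = \lambda(QS_n(v)) = (\id\ot QS_n)\lambda(v) = \sum a_i\ot QS_n(v_i)$ forces $QS_n(v_i)=0$ for all $i$). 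Therefore $\mathcal{J} = \bigoplus_{n\geq 2}\Ker QS_n$ is a homogeneous two-sided ideal which is simultaneously an $A(c)$-subcomodule of $TV$, hence a subcomodule algebra ideal, and the quotient $\B(V,c) = TV/\mathcal{J}$ is an $A(c)$-comodule algebra with the induced structure.

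I do not expect a serious obstacle here; the statement is essentially bookkeeping once one has Remark \ref{lem:lambda-colineal} in hand. The only point requiring a little care is the verification that the coaction on $TV$ is well-defined as an algebra map compatible with the relations of $A(c)$ — i.e. that $\lambda$ really does descend from $T C\ot TV$ to $A(c)\ot TV$ — but this is precisely the content of the universal property of the FRT-construction and the diagram preceding the definition of $A(c)$, so it may be cited rather than reproved. Equivalently, one can phrase the whole argument categorically: $\,^{A(c)}\M$ is a monoidal category, $V$ lives in it, $c$ is a morphism in it, hence every $c_k$ and every $QS_n$ is a morphism in it, so their kernels and the quotient $\B(V,c)$ live in it as well, and the algebra maps in sight are morphisms of comodule algebras.
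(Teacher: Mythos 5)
Your proof is correct and follows essentially the same route as the paper: invoke Remark \ref{lem:lambda-colineal} to get colinearity of each $c_k$, deduce that $QS_n$ is a comodule map, hence $\Ker QS_n$ is a subcomodule, and pass to the quotient $TV/\mathcal{J}$. The extra details you supply (that $TV$ is a comodule algebra, that kernels of comodule maps are subcomodules) are correct fillings-in of steps the paper leaves implicit.
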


\begin{proof}
By Remark
\ref{lem:lambda-colineal}, we have that $c_{k}$ is $A(c)$-colinear, which implies that 
$QS_{n}$ is an $A(c)$-comodule map. Thus, $\Ker QS_{n}$ is an
 $A(c)$-subcomodule of $V^{\ot n}$ for all $n\geq 2$. Hence,
taking the quotient module $\mathcal{J}$ defines an $A(c)$-comodule structure on 
$\B(V,c) = TV/\mathcal{J}$.
\end{proof}

Nichols algebras are a key ingredient in the classification of finite-dimensional pointed 
Hopf algebras and there is extensive literature covering the problem of finding
finite-dimen\-sional Nichols algebras. 
If the Nichols algebra is finite-dimensional and the braiding is rigid,
then special features arise. These properties
guide us to make a general construction that motivates
the definition of "weakly graded-Frobenius algebra"
that is the core of next section.

\section{The quantum determinant and the antipode formula}\label{sec:quantumdeterminant}
In this section we introduce a method for finding a quantum determinant 
associated with a rigid solution of the braid equation (and additional assumptions), and
prove our main results in Theorems \ref{teomain}, \ref{teomain-new} and Corollary \ref{cor:H=HPeter}.

\subsection{The quantum determinant} \label{subsec:quantumdeterminant}
The following definition extends the notion of Frobenius quantum space 
introduced by Manin in \cite[\S 8.1]{M2}. As in
\textit{loc. cit.}, we use it to define quantum determinants, to establish quantum Cramer 
and Lagrange identities, and to produce categorical dual objects.
\begin{defi}\label{def:finite-Nichols-type} 
Let $\mathcal{A}$ be a bialgebra and $V \in\ ^{\mathcal{A}}\M$.
An $\mathcal{A}$-comodule algebra $\B$ is called a
{\bf weakly graded-Frobenius} (WGF) algebra for $\mathcal{A}$ and $V$ if the following conditions are satisfied:
\begin{itemize}
\item[(WGF1)] $\B$ is an $\N$-graded $\mathcal{A}$-comodule algebra, that is
 $\B=\underset{n\geq 0}\bigoplus\B^n$, 
$\lambda(\B^n)\subseteq \mathcal{A}\ot \B^n$, where
$\lambda:\B\to  \mathcal{A}\ot \B$ is the structure map,
and $\B^n\cdot \B^m\subseteq \B^{n+m}$ for all $n,m\geq 0$;
\item[(WGF2)]  $\B$ is connected (i.e. $\B^0=\Bbbk$) and $\B^1=V$ as $\mathcal{A}$-comodules;
\item[(WGF3)] $\dim_\Bbbk\B<\infty$ and $\dim_\Bbbk\B^{top}=1$, where
$top=\max\{n\in\N : \B^n\neq 0\}$;
\item[(WGF4)] the multiplication induces
non-degenerate bilinear maps
\[
\B^1\times\B^{top-1}\to \B^{top},\qquad
\B^{top-1}\times\B^1\to \B^{top}.
\]
\end{itemize}
\end{defi}

Some remarks are in order:

$(i)$ Let $\mathcal{A}, \mathcal{A}'$ be bialgebras and let $\B$ a
WGF-algebra for $\mathcal{A}$ and $V$.
If $f:\mathcal{A}'\to \mathcal{A}$ is a bialgebra map, then $\B$ is also a
WGF-algebra for $\mathcal{A}'$. 

$(ii)$
 Let $\mathcal{A}$ be a bialgebra and $(V,c)$ a braided vector space. If 
$V \in\ ^{\mathcal{A}}\M$ is
 such that $c$ is $\mathcal{A}$-colinear
 and $\B$ is a
WGF-algebra for $\mathcal{A}$ and $V$, then
 the universal property of $A(c)$ determines a unique bialgebra map
$f:A(c) \to  \mathcal{A}$. Consequently 
$\B$ is a
WGF-algebra for $A(c)$ and $V$. In this case, 
$\B$ is directly associated with the braided vector space $(V,c)$. For short, 
we say that $\B$ is a
WGF-algebra for $A(c)$.

$(iii)$
A finite-dimensional graded algebra $\B=\underset{n\geq 0}
\bigoplus\B^n$ with
$\B^{0}=\Bbbk$ is called {\em graded-Frobenius} (GF) if there exists $p \in \N$ such that
$\dim \B^{p} = 1$, $\B^{p+j} = 0$ for $j > 0$ and the multiplication 
$\B^{ j} \times \B^{p-j} \to \B^{p}$ is
non-degenerate {\em for  all} $j$ with $0 \leq j \leq p$.
For instance any finite-dimensional graded connected
Hopf algebra in the category of Yetter-Drinfeld modules over a Hopf algebra
$H$ is GF, see \cite{N} and \cite[\S 3.2]{AG1} for more details.

$(iv)$ Let $(V,c)$ be a finite-dimensional rigid braided vector space and let $\B=\B(V,c)$
 be the Nichols algebra associated with it. 
If $\dim_k\B<\infty$, then by Proposition \ref{prop:B-A-comodulo}, the very definition of
 Nichols algebra and $(iii)$ above, 
it follows that $\B$ is a GF-algebra and hence 
a WGF-algebra for $A(c)$.  In this way, the theory of Nichols algebras
 provides plenty of  examples that are not necesarily quadratic,
 nor $N$-homogeneous.

$(v)$
One can easily give examples of WGF-algebras that are not 
GF  by adding to a GF-algebra
some elements in intermediate degrees with zero products, but these examples
are artificial in the sense that they do not occur naturally from the data $(V,c)$.
 However, given  an   algebra $\B$, it is in general
a difficult task to check whether or not it is a Nichols algebra: one should also  care
about   the coalgebra structure, verify that it is generated in degree one and there are no primitive
 elements of degree bigger than one. But for our pourposes, the only property that we need from the
algebra $\B$ is just part of the definition of graded Frobenius, and this is easy to 
check in examples. For this reason, we decide to extend the notion 
from GF to WGF, even though the only (no artificial) examples that we have 
are already GF. As a matter of example, concerning the Foming-Kirillov 
algebras, for the (known to be) finite dimensional ones, their Hilbert polynomials
were known much before we knew they were Nichols algebras, see for example \cite{FK}, \cite{MS}, \cite{AG2}, \cite{Gr} and \cite{GGI}.

$(vi)$ It is known that there are plenty of 
examples of braided Hopf algebras
in positive characteristic that are not Nichols algebras (e.g. they failed to have all 
primitive  elements in degree one). To the best of our knowledge, 
there are no examples in characteristic zero of graded connected finite-dimensional 
braided Hopf algebras that are not Nichols algebras.

$(vii)$ The conditions in (WGF4) appeared in \cite{M2}, related to involutive solutions of the 
QYBE (thus the corresponding $c$ is a symmetry) and in \cite{Gu},
related to Hecke-type solutions. It is known that in both cases the quantum
exterior algebras are Nichols algebras, thus this Definition generalizes \cite{M2,Gu}.

Fix a braided vector space $(V,c)$ and let $A=A(c)$ be the bialgebra given by the FRT-construction associated with $(V,c)$. 
The existence of a weakly graded-Frobenius algebra $\B$ for $A$ allows not only to
define a \textit{quantum determinant} for $A$, but also to give an explicit formula
for the antipode. We begin with the definition of a quantum determinant associated with $\B$.
Note that our definition is consistent with quantum (homological) determinants defined previously by other authors, see
for example \cite{M2}, \cite{JoZ}, \cite{KKZ}, \cite{CWW}.

\begin{defi} Let $\B$ be a weakly graded-Frobenius algebra for $A$ and write 
$\B^{top}=\Bbbk \b$ for some $0\neq \b\in \B$.  
We call such an element a \textit{volume element} for $\B$.
Since $\B^{top}$  is an $A$-subcomodule, we have that the coaction on $\b$ equals 
$\lambda(\b)= D\ot \b$ for some group-like element $D\in A$. 
We call this element $D$ {\em the quantum determinant in $A$ associated with $\B$}.
\end{defi}

\noindent Note that $D\in G(A)$ is independent of the scalar multiple of $\b$.

\begin{ex}\label{ex:exterior} Consider the braiding $c=-\tau$ 
on an $n$-dimensional space $V$. Then $A(-\tau)=\O(\Mq_{n})$ and  $\B(V,c) = \Lambda V$ is a left
$\O(\Mq_{n})$-comodule algebra. If
$\{x_1\cdots,x_n\}$ is a basis of $V$, then one may take
$\b=x_1\wedge\cdots \wedge x_n \in \Lambda^{n} V$. 
In this case,
$$D=\sum_{\sigma \in \mathbb{S}_n}(-1)^{\ell(\sigma)}
t_{\sigma(1)}^1\cdots t_{\sigma(n)}^n$$ is 
given by the usual determinant.
\end{ex}

\begin{notation}\label{not:Tij}
Let $\{x_1,\dots,x_n\}$ be a basis of $V$. 
Since by assumption  the multiplication $\B^1\times\B^{top-1}\to \B^{top}=\Bbbk \b$ is non-degenerate,
there exists a basis of $\B^ {top-1}$, say
$\{\om^1,\dots,\om^n\}\in\B^ {top-1}$, such that
\[
x_i\om^j=\delta_i ^j \b\quad \in \B^{top}.
\]
For $1\leq i,j\leq n$, we define the elements $T_i^j\in A$ by the equality
\[
\lambda(\om^i)=\sum_j T^i_j\ot \om^j \qquad \text{ for all }1\leq i \leq n.
\]
It is easy to check that  
$\Delta (T_j^i)=\sum_{k=1}^{n}T_k^i\ot T_j^k$ 
and $\eps(T_{i}^{j}) = \delta_{i}^{j}$ for all $1\leq i,j\leq n$.
\end{notation}

\begin{ex} 
Consider the braiding $c=-\tau$ on $V\ot V$
as in Example \ref{ex:exterior} above. Then, the elements  $w^j=(-1)^{i+1}x_1\wedge\cdots \wh {x_j}\cdots \wedge x_n$ give a "dual basis"
with respect to $\{x_1,\dots,x_n\}$
and the volume form
$\mathfrak b=
x_1\wedge\cdots  \wedge x_n$.
\end{ex}

Next we generalize the formula when expanding a determinant by a row using minors:

\begin{prop}\label{propFila}
The following formula holds in $A(c)$:
\begin{equation}\label{eq:A(c)-D}
\sum\limits_{k=1}^n t_i^kT^j_ k = \delta_ i^j D \qquad \text{ for all }1\leq i,j\leq n.
 \end{equation}
\end{prop}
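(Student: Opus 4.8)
The plan is to use that $\B$ is an $A$-comodule algebra and to evaluate its structure map $\lambda$ on the product $x_i\om^j$ in two different ways, playing off the defining relation $x_i\om^j=\delta_i^j\,\b$ of the dual basis introduced in Notation~\ref{not:Tij}. By (WGF1), $\lambda\colon\B\to A\ot\B$ is a morphism of algebras, and this is the only structural input needed.

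First I would compute the ``intrinsic'' side: since $x_i\om^j=\delta_i^j\,\b$ in $\B^{top}$ and $\lambda(\b)=D\ot\b$ by the definition of the quantum determinant, we get $\lambda(x_i\om^j)=\delta_i^j\,D\ot\b$. Then I would compute the ``matrix'' side using multiplicativity of $\lambda$ together with $\lambda(x_i)=\sum_k t_i^k\ot x_k$ and $\lambda(\om^j)=\sum_\ell T^j_\ell\ot\om^\ell$:
\[
\lambda(x_i\om^j)=\lambda(x_i)\,\lambda(\om^j)
=\Bigl(\sum_k t_i^k\ot x_k\Bigr)\Bigl(\sum_\ell T^j_\ell\ot\om^\ell\Bigr)
=\sum_{k,\ell}t_i^k T^j_\ell\ot x_k\om^\ell
=\sum_{k=1}^n t_i^k T^j_k\ot\b,
\]
where the last equality again uses $x_k\om^\ell=\delta_k^\ell\,\b$. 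Comparing the two computations inside $A\ot\B^{top}$ and using that $\B^{top}=\Bbbk\b$ is one-dimensional, so that $a\mapsto a\ot\b$ is an injection $A\hookrightarrow A\ot\B^{top}$, yields $\sum_{k=1}^n t_i^k T^j_k=\delta_i^j\,D$.

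I do not expect a genuine obstacle here: the identity is essentially just the $A$-colinearity of the multiplication $\B^1\times\B^{top-1}\to\B^{top}$ (colinearity being exactly what makes $\lambda$ multiplicative), combined with the non-degeneracy hypothesis (WGF4) already used to produce the $\om^j$. The only points deserving care are keeping the upper/lower index conventions of $t_i^k$ and $T^j_\ell$ consistent with Notation~\ref{not:Tij}, and observing that cancelling $\b$ is legitimate precisely because $\dim_\Bbbk\B^{top}=1$.
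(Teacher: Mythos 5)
Your argument is correct and is essentially identical to the paper's proof: both compute $\lambda(x_i\om^j)$ once via $x_i\om^j=\delta_i^j\b$ and $\lambda(\b)=D\ot\b$, and once via multiplicativity of the comodule-algebra structure map, then compare coefficients of $\b$. The remarks about one-dimensionality of $\B^{top}$ justifying the cancellation are a sound (if implicit in the paper) finishing touch.
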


\begin{proof}
Using the fact that $\{x_i\}_{1\leq i\leq n}$ and $\{w^j\}_{1\leq j\leq n}$ are dual bases
with respect to the multiplication, that is
$x_i\om^j=\delta_i^j\mathfrak b$ for all $1\leq i,j\leq n$, by
the comodule structure on $\B$ we get that 
\begin{align*}
\lambda(\delta_i^j\b) &=
\delta_i^jD\ot \b=\lambda(x_i\om^j)
=\lambda(x_i)\lambda(\om^j)\\
& =\sum_{k,\ell}t_i^kT^j_\ell\ot x_k\om^\ell
=\sum_{k,\ell}t_i^kT^j_\ell\ot \delta_k^\ell\b
=\sum_{k}t_i^kT^j_k\ot \b.
 \end{align*}
\end{proof}

\begin{ex} For $M\in \Mq_{n}(\Bbbk)$, let $\Cof(M)$ be the $(n\times n)$-matrix whose $( i,j)$-entry is the $ij$-minor.   
For $c=-\tau$, Proposition \ref{propFila} is nothing else than the well-known fact 
\[
M\cdot \Cof(M)^t=\det(M)I \hskip 1cm \forall\ M\in \Mq_n(\Bbbk).
\]
\end{ex}

\subsection{Main results}
In this subsection we prove our main theorem. We begin by introducing a Hopf algebra 
associated with the quantum determinant and give some properties of 
its category of finite-dimensional left comodules.
For the rest of this subsection, we fix 
a finite-dimensional rigid braided vector space  $(V,c)$
and assume there exists a weakly graded-Frobenius
algebra $\B$ for $A$. We write $D$ for the quantum determinant and $\b$ for the volume element. 
By Lemma \ref{lemaHayashi}, we know that there exists 
an automorphism $\J:=\J_{D} \in \Aut(A)$ associated with 
the quantum determinant $D$ such that $Da = \J(a)D$ for all $a\in A$.

\begin{defi}\label{def:localization}
Let $A$ be a $\Bbbk$-algebra and $D$ a non-zero element in $A$. We define the \textit{localization} of 
$A$ in $D$ as a pair $(H,\iota)$, where $H$ is a $\Bbbk$-algebra and $\iota: A \to H$ is an algebra map 
that satisfies the following universal property: for any algebra map $f:A\to B$ such that $f(D)$ is invertible 
in $B$, there exists a unique algebra map $\bar{f}: H \to B$ such that $\bar{f}\circ \iota = f$; i.e. the following
diagram commutes
$$ \xymatrix{A \ar[rr]^{\iota} \ar[rd]_{\forall\ f \text{ s.t.} \atop f(D) \text{ invertible}}& & H \ar@{-->}[dl]^{\exists\ !\ \bar{f}} \\
& B &}
$$
We call $\iota: A \to H$ the \textit{canonical map}. By the universal property above, 
it follows that the localization, if it exists, is unique up to isomorphism. We denote it 
as $H=A[D^{-1}]$, if no confusion arises. 
\end{defi}

\begin{remark}
The 
 localization of a bialgebra with respect to a group-like element $D$ always exists in 
the following sense:
If $A$ is a $\Bbbk$-algebra one can always consider the polynomial algebra in one
 indeterminate $\Bbbk[x_0]$ and the free product
$A*\Bbbk[x_0]$. It has the universal property that 
 given a $\Bbbk$-algebra map $f:A\to B$
and $b_0\in B$, then there exists a unique $\Bbbk$-algebra 
map $\Phi:A*\Bbbk[x_0]$ such that $\Phi|_A=f$ and
 $\Phi(x_0)=b_0$.

Now, given an element $D\in A$, one can consider the two-sided ideal $J$ generated by 
$x_0D-1$ and $Dx_0-1$, and define $A[D^{-1}]:=(A*\Bbbk[x_0])/J$. If $f:A\to B$ is an algebra map such that 
$f(D)=s_0$ is invertible, then one can consider $s_0^{-1}\in B$ and define $\Phi:A*k[x_0]\to B$
by $\Phi|_A=f$ and $\Phi(x_0)=s_0^{-1}$. This map satisfy $\Phi(x_0D-1)=0=\Phi(Dx_0-1)$, so,
it induces an algebra map on the quotient. In other words, $H:=A*\Bbbk[x_0]/J$ satisfies the
 universal property.

The only problem that one can face is that maybe $J$ is not a proper ideal. If $J=A*\Bbbk[x_0]$
(e.g. if $D=0$ then $J=\langle 1\rangle$)  then $H$ is the zero algebra, 
and one has
$1=0$ in $H$.

For a counitary bialgebra $A$ and a nonzero group-like element $D\in A$
 one has the advantage that $\varepsilon(D)=1$ (by counitarity) and 
 $A*\Bbbk[x_0]$ has a unique counitary bialgebra structure determined by
$\Delta|_{A\ot A}=\Delta_A$, $\Delta(x_0)=x_0\ot x_0$, $\varepsilon|_A=\varepsilon_A$ and
$\varepsilon(x_0)=1$. One can easily see that $\varepsilon(x_0D-1)=0=\varepsilon(Dx_0-1)$, so $J$
is included in the kernel of the counit. In particular $(A*\Bbbk[x_0])/J$ is a non-zero $\Bbbk$-algebra.
 \end{remark}

\begin{ex}
Let $G=F_2=F(x,y)$ the free group on two elements $x$ and $y$, $H=\Bbbk[G]$ the group algebra,
and $A\subset H$ the $\Bbbk$ subalgebra generated by $x$, $x^{-1}$ and $y$. Taking $D=y$, the
inclusion $A\to H$ has the universal property of the localization of $A$ in $D$. 
\end{ex}

In the above example we see that the localization is not necesarily a ``calculus of fractions",
in the sense that not every element in $H$ is of the form $y^{-n}a$ 
or $ay^{-n}$ for $a\in A$; that is, $y$ do not satisfy the Ore condition in 
$\Bbbk\langle x^{\pm 1},y\rangle\subset \Bbbk[F_2]$. Nevertheless,
our situation is much simpler:
\begin{rem}
Due tu Hayashi's result 
(see Lemma \ref{lemaHayashi}), for a coquasitriangular bialgebra $A$ and a (non-zero) 
group-like element $D$, there exists an automorphism $\J_D:A\to A$ such that
$Da=\J_D(a)D$ for all $a\in A$. So, 
the multiplicative set $\{D^{n}\}_{n\in \N_{0}}$ satisfies the Ore condition 
and
every element in $A[D^{-1}]$ can be written
as $D^{-n}a$ (or $aD^{-n}$) for some $a\in A$, $n\in \N_{0}$. In particular, the localization
$A[D^{-1}]$ as defined above coincides with the Ore-localization corresponding to the multiplicative set 
$\{D^{n}\}_{n\in \N_{0}}$. 
In particular, $A[D^{-1}]$ is a coquasitriangular bialgebra.

\end{rem}

We introduce now the localization of $A(c)$ in the quantum determinant.

\begin{defi}\label{def:H(c)}
Let $H(c)$ be the $\Bbbk$-algebra 
generated by the elements $\{t_{i}^{j}\}_{i,j}$ and $D^{-1}$ satisfying the relations \eqref{eq:FRT}
and 
\begin{equation}\label{eq:D}
DD^{-1}= 1= D^{-1}D. 
\end{equation}
 \end{defi}
\noindent 
It easy to see that $H(c)$ is indeed a localization of $A(c)$ in $D$. 
For this reason, we write indistinctly
$H(c)= A[D^{-1}]$; the canonical map is denoted by $\iota: A(c)\to H(c)$. See Section
 \ref{sec:examples} for examples.
The next result follows from \cite[Theorem 3.1]{H}.
 We give its proof for completeness. 
\begin{lem}\label{lem:H(c)bialgebra}
$H(c)$ is a coquasitriangular bialgebra.
\end{lem}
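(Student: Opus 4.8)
The plan is to show that the bialgebra structure on $A(c)$ extends to $H(c) = A(c)[D^{-1}]$ and that the coquasitriangular structure $r$ extends as well. Since $D$ is a group-like element, the remark preceding Definition \ref{def:H(c)} already tells us that the Ore condition holds for the multiplicative set $\{D^n\}_{n\geq 0}$, so $H(c)$ is the Ore localization and every element has the form $D^{-n}a$ for some $a\in A(c)$. First I would recall the standard fact (cf. the remark on localizations of bialgebras in the excerpt) that the free product $A(c) * \Bbbk[x_0]$ carries a unique bialgebra structure with $\Delta(x_0) = x_0 \ot x_0$, $\eps(x_0) = 1$, and that the ideal $J = \langle x_0 D - 1, D x_0 - 1\rangle$ is a coideal contained in $\Ker \eps$ (using that $D$ is group-like, so $\Delta(x_0 D - 1) = x_0 D \ot x_0 D - 1 \ot 1 = (x_0D-1)\ot x_0 D + 1 \ot (x_0 D - 1)$, and similarly for $D x_0 - 1$). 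Hence $H(c) = (A(c) * \Bbbk[x_0])/J$ inherits a bialgebra structure with $\Delta(D^{-1}) = D^{-1} \ot D^{-1}$ and $\eps(D^{-1}) = 1$; the canonical map $\iota : A(c) \to H(c)$ is a bialgebra map.

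Next I would extend the coquasitriangular form. By Remark \ref{rmk:D-coquasi}$(a)$, the maps $r(D, -)$ and $r(-, D)$ are algebra maps $A(c) \to \Bbbk$ with $r(D,1) = 1 = r(1,D)$, so they send $D$ to an invertible scalar; in fact $r(D,D)$ is invertible with inverse $r^{-1}(D,D)$, and one checks $r(D,D) \neq 0$ directly (e.g. from $(CQT1)$-$(CQT2)$ and $r * \bar r = \eps \ot \eps$). I would then define $r$ on $H(c) \times H(c)$ by setting $r(D^{-1}, a) = r(D,a)^{-1}$, $r(a, D^{-1}) = r(a,D)^{-1}$ for generators and extending via $(CQT1)$ and $(CQT2)$ — equivalently, lift $r$ to $(A(c)*\Bbbk[x_0]) \times (A(c)*\Bbbk[x_0])$ and verify it descends through $J$ in each variable. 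Checking $(CQT1)$, $(CQT2)$ is then a bookkeeping computation using multiplicativity; checking $(CQT3)$ on the new generator reduces, via Remark \ref{rmk:D-coquasi}$(c)$, to the commutation rule $r(D^{-1}, b_{(1)}) D^{-1} b_{(2)} = b_{(1)} D^{-1} r(D^{-1}, b_{(2)})$, which follows by applying $r(D,-)^{\pm 1}$ to the known relation $r(D,b_{(1)}) D b_{(2)} = b_{(1)} D r(D, b_{(2)})$ and using that $D^{-1}$ commutes with things up to $\J_D^{-1}$. Finally I would note convolution-invertibility of the extended $r$ follows because its inverse is built the same way from $r^{-1}$, which exists on $A(c)$.

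The main obstacle will be the verification that the bilinear form $r$ is well-defined on $H(c)$, i.e. that it factors through the relations \eqref{eq:D} consistently in \emph{both} arguments simultaneously — one must confirm that $r(D^{-1}D, a) = r(1,a)$ and $r(a, D^{-1}D) = r(a,1)$ after the extension, and that the two a priori different ways of evaluating $r$ on a product involving $D^{-1}$ agree. This is where one genuinely uses that $r(D,-)$ and $r(-,D)$ are \emph{algebra} maps (Remark \ref{rmk:D-coquasi}$(a)$) rather than merely linear, together with the compatibility $r(D, D) = r(D,D)$ being a genuine scalar. Since the excerpt says the result follows from \cite[Theorem 3.1]{H}, I would also remark that Hayashi's theorem gives precisely that inverting a set of group-like elements in a coquasitriangular bialgebra yields again a coquasitriangular bialgebra, so the proof can alternatively be phrased as: $D$ is a single group-like element, $\J_D \in \Aut(A(c))$ exists by Lemma \ref{lemaHayashi}, the Ore localization at $\{D^n\}$ exists and equals $H(c)$, and \cite[Theorem 3.1]{H} applies verbatim to conclude $H(c)$ is a coquasitriangular bialgebra with the extended structure described above.
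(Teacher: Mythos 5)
Your overall strategy is the paper's: make $D^{-1}$ group-like, check that the ideal generated by $x_0D-1$ and $Dx_0-1$ is a coideal contained in $\Ker\eps$ so that the bialgebra structure descends to $H(c)$, and then extend the coquasitriangular form using $(CQT1)$--$(CQT2)$. The paper's proof differs only cosmetically: instead of working directly in the free product, it first passes through an intermediate algebra $A'$ in which the commutation relation $t_i^jD^{-1}=D^{-1}\J(t_i^j)$ is imposed, verifies that $\Delta$ is well defined there using that $\J$ is a bialgebra automorphism, and only then kills $DD^{-1}-1$ and $D^{-1}D-1$. Your alternative remark that one can simply invoke Hayashi's theorem on inverting group-like elements is exactly how the paper frames the lemma ("follows from [H, Theorem 3.1]; we give its proof for completeness").

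The one step that fails as literally written is your defining formula $r(D^{-1},a)=r(D,a)^{-1}$ and $r(a,D^{-1})=r(a,D)^{-1}$ "for generators $a$". The scalar $r(D,t_i^j)$ is very often zero --- for instance, in the set-theoretic examples one has $r(D,t_a^b)=\delta_{b,a_N}\,\alpha_{\dots}\,q^N$, which vanishes off a permutation pattern --- so its multiplicative inverse need not exist. What is required, and what the paper writes, is the \emph{convolution} inverse: $r(D^{-1},a):=r^{-1}(D,a)$ and $r(a,D^{-1}):=r^{-1}(a,D)$. Equivalently, $r(D^{-1},-)$ is the inverse of the algebra map $r(D,-)$ in the convolution algebra $\Hom(A,\Bbbk)$; this exists because $D$ is group-like and $r$ is convolution invertible, and on the matrix generators it is given by the entries of an inverse matrix, not by reciprocals. (On group-like arguments, such as $a=D$, the two notions do coincide, which is presumably the source of the slip.) With that correction the rest of your argument --- well-definedness through $D^{-1}D=1$ via $(CQT1)$--$(CQT2)$, and $(CQT3)$ for $D^{-1}$ obtained by convolution-inverting the commutation rule for $D$ --- goes through and reproduces the paper's proof.
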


\begin{proof}
Let $A'$ be the algebra generated by the same elements but satisfying 
only \eqref{eq:FRT} and 
$$t_{i}^{j}D^{-1} = D^{-1}\J(t_{i}^{j}) \qquad \text{ for all }1\leq i,j\leq n.$$ 
Then,  $H(c)=A'/J$,
where $J$ is the two-sided ideal generated by the relation \eqref{eq:D}. 
In particular, 
we have that $\iota: A\to H(c)$ factorizes through $A'$.
Note that since $\J$ is a bialgebra map, one has that 
$aD^{-1} = D^{-1}\J(a)$ for all $a \in A$.

By defining $\Delta(D^{-1}) = D^{-1}\ot D^{-1}$ and $\eps(D^{-1})=1$, we may 
endow $A'$ with a bialgebra 
structure:  
since  $\J$ is a bialgebra map, one has that 
\[
\Delta(aD^{-1}-D^{-1}\J(a))=a_{(1)}D^{-1}\ot a_{(2)}D^{-1}-D^{-1}\J(a_{(1)})\ot D^{-1}\J(a_{(2)})
\]
\[
=a_{(1)}D^{-1}\ot a_{(2)}D^{-1}-D^{-1}\J(a_{(1)})\ot a_{(2)}D^{-1}+D^{-1}\J(a_{(1)})\ot a_{(2)}D^{-1}
-D^{-1}\J(a_{(1)})\ot D^{-1}\J(a_{(2)})
\]
\[
=\big(a_{(1)}D^{-1}-D^{-1}\J(a_{(1)})\big)\ot a_{(2)}D^{-1} +D^{-1}\J(a_{(1)})\ot\big( a_{(2)}D^{-1}-D^{-1}\J(a_{(2)})\big),
\]
for all $a\in A$. So, $\Delta$ is well-defined on $A'$. Also $\eps$ is well-defined since,
by
the explicit description of $\J$ (see Lemma \ref{lemaHayashi}), we have
$\eps(aD^{-1}) = \eps(a) = \eps(D^{-1}\J(a))$ for all
$a\in A$.

 To show that $H(c)$ is a bialgebra, it is enough to show that 
$J$ is also a coideal. This follows by a direct computation since
both $D$ and $D^{-1}$ are group-like elements.
Finally, the coquasitriangular structure is defined extending the coquasitriangular 
structure on $A$ by 
$r(D^{-1}, a) = r^{-1}(D,a)$ and $r(a, D^{-1}) = r^{-1}(a,D)$ for all $a\in A$. It is well-defined 
thanks to (CQT1)-(CQT2).

\end{proof}

\begin{rem}
In the category ${}^{H(c)}\M$, the comodule $\B^{top}$ is invertible, that is, there exists an 
$H(c)$-comodule $M$ such that $\B^{top} \ot M\simeq \Bbbk \simeq M\ot \B^{top}$; in particular,
$\B^{top}$ and $M$ are 
one-dimensional.
Indeed, consider  the one-dimensional vector space $\Bbbk D^{-1}$ with generator
$D^{-1}$ and whose left $H(c)$-comodule structure is given by
$
\lambda(D^{-1})=D^{-1}\ot D^{-1}
$.
Since 
the (diagonal) coaction on $\Bbbk \b\ot \Bbbk D^{-1}$ is trivial, i.~e.~
\[
\lambda(\b\ot D^{-1})=DD^{-1}\ot (\b\ot D^{-1})=
1\ot (\b\ot D^{-1}),
\]
it turns out that
$\Bbbk \b \ot \Bbbk D^{-1}\cong \Bbbk $ as $H(c)$-comodules. 
Similarly $\Bbbk D^{-1}\ot \B^{top}\cong \Bbbk $.
\end{rem}

\begin{defi}\label{defdual} Let $V^*$ and ${}^*V$ be the  
$H(c)$-comodules given by
\[
V^* :=\B^{top-1}\ot \Bbbk D^{-1}
,\
 {}^*V :=\Bbbk D^{-1}\ot \B^{top-1}
\]
Using that the multiplication $m_\B$ of $\B$ gives  non-degenerate colinear maps
\[
V\ot \B^{top-1}\to \B^{top}=\Bbbk \b,
\qquad \B^{top-1} \ot V \to \B^{top}=\Bbbk \b,
\]
we may define evaluation maps $\ev_\ell: {}^*V \ot V\to \Bbbk $ and $\ev_r:V\ot V^* \to \Bbbk $
by the following compositions. For the right evaluation:
\[
\xymatrix@-2ex{
V\ot V^*\ar@{=}[d]\ar@/^/[rrd]^{\ev_r}&&\\
V\ot (\B^{top-1}\ot \Bbbk D^{-1})\ar[r]&\Bbbk \b \ot \Bbbk D^{-1}\ar[r]^{\cong}&\Bbbk \\
x\ot (w \ot D^{-1})\ar@{|->}[r] &xw\ot D^{-1}&\\
                                                        &\ar@{=}[u] \ev_ r(x,w\ot D^{-1})\b\ot D^{-1} \ar@{|->}[r]&
 \ev_r(x,w\ot D^{-1})}
\]
and similarly on the left:
\[
\xymatrix@-2ex{
  {}^*V\ot V \ar@{=}[d]\ar@/^/[rrd]^{\ev_\ell}&&\\
(\Bbbk D^{-1}\ot \B^{top-1})\ot V\ar[r]&\Bbbk D^{-1} \ot \Bbbk \b\ar[r]^{\cong}&\Bbbk \\
(D^{-1}\ot w) \ot x\ar@{|->}[r] &D^{-1}\ot wx&\\
 &\ar@{=}[u] \ev_\ell(D^{-1} \ot w,x)\b\ot D^{-1} \ar@{|->}[r]&
 \ev_\ell(D^{-1} \ot w,x)}
\]
Observe that everything depends on the choice of the ``volume element" $\b$.
\end{defi}

\begin{rem}
By (WGF4) one may define
two (possibly different) bases for $\B^{top-1}$ which are
right and left dual to a given basis $\{x_1,\dots,x_n\}$ of $V$, say $\{w_\ell^1,\dots,w_\ell^n\}$ and $\{w_r ^1,\dots,w_r ^n\}$, 
satisfying for all $1\leq i,j\leq n$ that
\[
x_iw_\ell^j=\delta_i^j\b,\hskip 1cm
w_r ^jx_i=\delta_i^j\b. 
\]
\end{rem}

For $V^* =\B^{top-1}\ot \Bbbk D^{-1}$
and  $ {}^*V =\Bbbk D^{-1}\ot \B^{top-1}$ as above, define left and right coevaluation maps 
$\coev_{\ell}:\Bbbk \to V\ot  {}^*V$ and $\coev_{r}:\Bbbk \to {}^*V \ot  V$ by
\[
\coev_\ell(1):=\sum_ix_i\ot  (D^{-1}\ot w_\ell^i),\qquad 
\coev_r(1):=\sum_i (D^{-1}\ot w_r^i) \ot x_i.
\]

By a direct computation we obtain the following:
\begin{lem}
$V^* $ and $ {}^*V $ are, respectively, right and left duals of $V$ in $\ ^{H(c)}\M$.\qed
\end{lem}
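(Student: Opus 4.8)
The plan is to verify the triangle (zig-zag) identities for each of the two candidate duals, exhibiting the relevant diagrams of $H(c)$-comodules and checking them on a basis. Recall that $V^* = \B^{top-1}\ot \Bbbk D^{-1}$ is to be a right dual of $V$: this means producing $\ev_r: V\ot V^*\to \Bbbk$ and $\coev_\ell: \Bbbk \to V\ot {}^*V$—wait, more precisely, a right dual structure on $V^*$ requires a unit $\eta: \Bbbk \to V^*\ot V$ and counit $\ev_r: V\ot V^*\to \Bbbk$ satisfying $(\ev_r\ot \id_V)(\id_V\ot \eta) = \id_V$ and $(\id_{V^*}\ot \ev_r)(\eta\ot \id_{V^*})=\id_{V^*}$. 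First I would pin down which coevaluation pairs with which evaluation: $\coev_r:\Bbbk\to {}^*V\ot V$ together with $\ev_\ell: {}^*V\ot V\to \Bbbk$ makes ${}^*V$ a left dual, and $\coev_\ell:\Bbbk\to V\ot {}^*V$ together with the appropriate evaluation makes $V^*$ a right dual after the standard reindexing—here the bookkeeping between ${}^*V$ and $V^*$ via $D^{-1}$ is the only subtlety, and I would simply use the two dual bases $\{w_\ell^i\}$ and $\{w_r^i\}$ of $\B^{top-1}$ from the preceding remark.

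The core computation is then purely combinatorial. For the left dual ${}^*V$: apply $(\ev_\ell\ot \id_V)(\id_V\ot \coev_r)$ to $x_j$. Unwinding the definitions, $\coev_r(1) = \sum_i (D^{-1}\ot w_r^i)\ot x_i$, so $x_j\ot \coev_r(1)\mapsto \sum_i x_j\ot (D^{-1}\ot w_r^i)\ot x_i$, and applying $\ev_\ell$ to the first two tensor factors gives $\sum_i \ev_\ell(D^{-1}\ot w_r^i, x_j)\,x_i$. Wait—$\ev_\ell$ takes arguments in ${}^*V\ot V$, so I need $\ev_\ell((D^{-1}\ot w_r^i)\ot x_j)$, which by Definition \ref{defdual} is the scalar $\alpha$ with $D^{-1}\ot w_r^i x_j = \alpha\, \b\ot D^{-1}$; but $w_r^i x_j = \delta^i_j\b$ by the defining property of the right-dual basis $\{w_r^i\}$, so $\ev_\ell((D^{-1}\ot w_r^i)\ot x_j) = \delta^i_j$, and the whole expression collapses to $x_j$, giving $\id_V$. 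The other triangle identity, $(\id_{{}^*V}\ot \ev_\ell)(\coev_r\ot \id_{{}^*V})$ applied to $D^{-1}\ot w$, is dual and uses instead that $\{x_i\}$ is dual to $\{w_r^i\}$ on the right—one writes $w$ in the basis $\{w_r^j\}$ and runs the same collapse; one small extra point is that $\B^{top}$ is one-dimensional so every product $x_i w = $ (scalar)$\b$, which keeps all intermediate terms in the rank-one spaces. The right-dual identities for $V^*$ are the mirror image, using $\{w_\ell^i\}$, $\coev_\ell$, and the left-dual basis relation $x_i w_\ell^j = \delta^i_j\b$.

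Finally I would note that all of these maps are morphisms in $\ ^{H(c)}\M$ rather than merely linear: $\ev_r,\ev_\ell$ are colinear because $m_\B$ is colinear (being the multiplication of a comodule algebra) and because the coaction on $\Bbbk\b\ot\Bbbk D^{-1}$ and on $\Bbbk D^{-1}\ot\Bbbk\b$ is trivial, as recorded in the remark after Lemma \ref{lem:H(c)bialgebra}; and $\coev_\ell,\coev_r$ are colinear precisely because of the cofactor identity. Indeed, applying $\lambda$ to $\coev_r(1)=\sum_i(D^{-1}\ot w_r^i)\ot x_i$ and using $\lambda(w_r^i) = \sum_j \wt T^i_j\ot w_r^j$ (notation of \ref{not:Tij}, adapted to the right-dual basis) together with $\lambda(x_i)=\sum_k t_i^k\ot x_k$, one gets $\sum_{i,j,k} D^{-1}\wt T^i_j t_i^k \ot (D^{-1}\ot w_r^j)\ot x_k$; colinearity amounts to $\sum_i D^{-1}\wt T^i_j t_i^k = \delta^k_j 1$, which is exactly $\sum_i \wt T^i_j t_i^k = \delta^k_j D$—the ``expand the determinant along a column'' identity, i.e.\ the left-handed companion of Proposition \ref{propFila}, pushed down to $H(c)$ where $D$ is invertible. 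I expect this colinearity check—realizing that the zig-zag bookkeeping forces exactly the cofactor formula of Proposition \ref{propFila} and its transpose—to be the one genuinely substantive point; the triangle identities themselves are then a short unwinding of Definition \ref{defdual}, which is why the paper is content to say ``by a direct computation''.
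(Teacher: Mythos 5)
Your overall plan---verify the zig-zag identities and then the $H(c)$-colinearity of the four structure maps---is the right reading of ``direct computation'', and you are right that the evaluations are colinear for free (multiplication of the comodule algebra $\B$ followed by the trivial coaction on $\Bbbk\b\ot\Bbbk D^{-1}$). But two things go wrong. First, the bookkeeping: a left dual structure on ${}^*V$ with counit $\ev_\ell:{}^*V\ot V\to\Bbbk$ requires a unit landing in $V\ot{}^*V$, namely $1\mapsto\sum_i x_i\ot(D^{-1}\ot w_r^i)$, not $\coev_r:\Bbbk\to{}^*V\ot V$; your composite $(\ev_\ell\ot\id)(\id\ot\coev_r)$ does not typecheck, and the way you rescue it---evaluating $\ev_\ell$ on the second and first tensor factors---silently inserts a permutation of factors that is not a morphism of the monoidal structure. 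The zig-zags do hold, but only after you pair $\ev_\ell$ with $1\mapsto\sum_i x_i\ot(D^{-1}\ot w_r^i)$ and $\ev_r$ with $1\mapsto\sum_i(w_\ell^i\ot D^{-1})\ot x_i$ (the displayed formulas for $\coev_\ell$ and $\coev_r$ before the lemma have the roles of $w_\ell$ and $w_r$ interchanged, which is probably what tripped you up).

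The second point is the genuine gap. You correctly isolate the colinearity of the coevaluations as the substantive step and reduce it to $\sum_i\wt T^i_j t_i^k=\delta_j^k D$, but you then dismiss this as the ``expand-along-a-column'' companion of Proposition \ref{propFila}. It is not. The identity that the argument of Proposition \ref{propFila} actually yields from the second nondegenerate pairing (apply $\lambda$ to $w_r^jx_i=\delta_i^j\b$) is $\sum_k\wt T^j_k t_i^k=\delta_i^j D$, summed over the \emph{other} pair of indices; as matrix identities in the noncommutative ring $A(c)$ these are genuinely different (one says a certain matrix is a right inverse of $\bt=(t_i^j)$, the other that a related matrix is a left inverse of the transpose of $\bt$). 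What colinearity of the coevaluation needs is precisely that the one-sided inverse $TD^{-1}$ of $\bt$ be two-sided, i.e.\ condition \eqref{eq:teocompfirendly} --- which is the unproved hypothesis of Theorem \ref{teomain-new}, is only derived a posteriori from the antipode axioms under the injectivity hypothesis of Theorem \ref{teomain}, and whose direct proof in $A(c)$ the paper explicitly leaves as an open question. So this step cannot be closed by a determinant expansion; your proof (and, for that matter, the unconditional statement of the lemma) is implicitly using the hypotheses of the main theorems at exactly this point.
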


\begin{rem} Using similar arguments as before, one has that 
$(V^*)^*=  \Bbbk \b\ot V\ot \Bbbk D^{-1}$ and 
${}^*({}^*V)=    \Bbbk D^{-1} \ot V\ot \Bbbk \b $. In particular, 
$V^{**} \simeq V \simeq {}^{**}V$.
\end{rem}

The next theorem is our first main result. It states that $H(c)$ is indeed a Hopf algebra,
 provided the 
canonical map $\iota: A(c)\to H(c)$ is injective. This is the case when $D$ is not a zero divisor in $A$.

\begin{teo}\label{teomain}
If the canonical map $\iota: A(c)\to H(c)$ is injective
 then ${}^{H(c)}\M$ is rigid, tensorially generated by
$V$ and $\Bbbk D^{-1}$.
As a consequence, $H(c)$ is a coquasitriangular
 Hopf algebra. Moreover, the formula for the antipode is
given on generators by
$\SS(D^{-1})=D$, and for all $1\leq i,j\leq n$:
\[
\SS(t_i^j):=T_i^jD^ {-1}.
\]
\end{teo}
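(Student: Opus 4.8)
The plan is to deduce everything from the well-known characterization of Hopf algebras among bialgebras: a bialgebra is a Hopf algebra precisely when its category of finite-dimensional comodules is rigid. Thus the core of the proof is to show that ${}^{H(c)}\M$ is rigid and tensorially generated by $V$ and $\Bbbk D^{-1}$. We already have the two crucial dualities: by the Lemma immediately preceding the statement, $V$ has a left dual ${}^*V$ and a right dual $V^*$ in ${}^{H(c)}\M$, and $\Bbbk D^{-1}$ has left and right dual $\Bbbk D$ because $D$ is group-like. Moreover $H(c)$ is coquasitriangular by Lemma \ref{lem:H(c)bialgebra}, so ${}^{H(c)}\M$ is braided and a left dual automatically produces a right one; hence it suffices to control left duals.

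First I would establish tensorial generation. By Definition \ref{def:H(c)}, $H(c)$ is generated as a $\Bbbk$-algebra by the comatrix elements $t_i^j$ and by $D^{-1}$, which are matrix coefficients of the comodules $V$ and $\Bbbk D^{-1}$; hence every element of $H(c)$ is a matrix coefficient of a finite direct sum of tensor products of copies of $V$ and of $\Bbbk D^{-1}$, and by the fundamental theorem of comodules every finite-dimensional $H(c)$-comodule is a subquotient of such a direct sum. This is where the hypothesis enters: injectivity of $\iota$ lets us identify $A(c)$ with its image, and since $\{D^n\}_{n\ge 0}$ is an Ore set (Lemma \ref{lemaHayashi}) we have $H(c)=\bigcup_{n\ge 0}D^{-n}A(c)$, so a subcomodule of $V^{\ot m}$ sitting inside $H(c)$ is a genuine $A(c)$-subcomodule of $V^{\ot m}$; as $\Bbbk D^{-1}$ is $\ot$-invertible, rigidity of ${}^{H(c)}\M$ is thereby reduced to dualizability of subcomodules and quotient comodules of the objects $V^{\ot m}$.

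The main obstacle is this last reduction. The argument I have in mind is: a finite-dimensional comodule $N$ is left dualizable in ${}^{H(c)}\M$ iff its comatrix is invertible over $H(c)$, the inverse being (up to transpose) the comatrix of ${}^*N$ — this comes from writing the colinearity of $\ev$ and $\coev$ in bases. Now $V^{\ot m}$ is dualizable with dual $({}^*V)^{\ot m}$, so its comatrix is invertible; given $M\subseteq V^{\ot m}$, a basis of $V^{\ot m}$ adapted to $M$ turns this comatrix into a block upper triangular matrix with the comatrix of $M$ as a diagonal block, and the diagonal blocks of an invertible block triangular matrix over a ring are invertible. Hence $M$ — and symmetrically $V^{\ot m}/M$ — is dualizable. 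Together with closure of dualizable comodules under $\ot$, $\oplus$ and retracts, this gives that every finite-dimensional $H(c)$-comodule is dualizable, so ${}^{H(c)}\M$ is rigid, and the criterion above yields that $H(c)$ is a Hopf algebra, coquasitriangular by Lemma \ref{lem:H(c)bialgebra}. I expect the fiddly point to be aligning the algebraic description (block triangular comatrix) with the categorical one (subquotient), and ensuring the invertibility of the diagonal blocks is genuinely realized inside $H(c)$; this is exactly the place where injectivity of $\iota$ does the work.

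Finally, the antipode formula: once $\SS$ is known to exist it is forced on generators by the comodule structures of the duals. Since $\Bbbk D$ is the dual of $\Bbbk D^{-1}$ and $D$ is group-like, $\SS(D^{-1})=D$. For $V^*=\B^{top-1}\ot\Bbbk D^{-1}$, the coaction on $\om^i\ot D^{-1}$ is $\sum_j T_i^j D^{-1}\ot(\om^j\ot D^{-1})$, using Notation \ref{not:Tij} and $\Delta(D^{-1})=D^{-1}\ot D^{-1}$; matching this with the standard description of the coaction on a right dual gives $\SS(t_i^j)=T_i^j D^{-1}$. As a consistency check one reads off directly from Proposition \ref{propFila} that $\sum_k t_i^k\,\SS(t_k^j)=\delta_i^j DD^{-1}=\delta_i^j 1=\eps(t_i^j)1$, i.e. $\id*\SS=\eta\eps$ on the generators $t_i^j$ (and trivially on $D^{-1}$); the identity $\SS*\id=\eta\eps$ then holds because $\SS$, being the antipode, is a two-sided convolution inverse of the identity — equivalently, via the companion "column" expansion of the quantum determinant.
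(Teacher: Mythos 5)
Your reduction of rigidity to ``subquotients of $V^{\ot m}$ are dualizable'' rests on the claim that the diagonal blocks of an invertible block upper triangular matrix over a ring are invertible. That claim is false over noncommutative rings, and $H(c)$ is noncommutative. From
$\left(\begin{smallmatrix} A & B \\ 0 & C\end{smallmatrix}\right)\left(\begin{smallmatrix} P & Q \\ R & S\end{smallmatrix}\right)=I=\left(\begin{smallmatrix} P & Q \\ R & S\end{smallmatrix}\right)\left(\begin{smallmatrix} A & B \\ 0 & C\end{smallmatrix}\right)$ one only extracts $PA=I$ and $CS=I$, i.e.\ \emph{one-sided} inverses of the diagonal blocks; e.g.\ over a ring with $vu=1\neq uv$ and $e=1-uv$, the matrix $\left(\begin{smallmatrix} u & e \\ 0 & v\end{smallmatrix}\right)$ is invertible with inverse $\left(\begin{smallmatrix} v & 0 \\ e & u\end{smallmatrix}\right)$, yet $u,v$ are not units. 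Nor does the coalgebra structure rescue you: in the monoid bialgebra of the bicyclic monoid $\langle g,h\mid hg=1\rangle$ the group-like $g$ (a $1\times1$ comatrix) has a one-sided inverse but is not invertible, so a subcomodule of a dualizable comodule need not be dualizable in a general bialgebra. A telling symptom is that your argument never really uses the injectivity of $\iota$, so if it worked it would prove the theorem unconditionally and settle the open Question posed after Corollary \ref{cor:H=HPeter} --- which the authors explicitly cannot do.

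The paper's proof avoids subquotients altogether at the point where invertibility is needed. Given a finite-dimensional $H(c)$-comodule $M$ with comatrix entries $h_i^j$, it uses the normality of $D$ (Lemma \ref{lemaHayashi}) together with injectivity of $\iota$ to write $h_i^j D^N\in A(c)$ for $N$ large, and then twists: $\wt M=M\ot(\Bbbk\b)^{\ot N}$ is a genuine $A(c)$-comodule and $M\cong\wt M\ot(\Bbbk D^{-1})^{\ot N}$. Injectivity is exactly what guarantees that the comatrix identities for $(h_i^jD^N)$, which hold in $H(c)\ot H(c)$, already hold in $A(c)\ot A(c)$. The Hopf structure is then obtained from the generating subcoalgebra spanned by the $t_i^j$ and $D^{-1}$, whose comatrix is invertible on the nose: Proposition \ref{propFila} gives the right inverse $(T_i^jD^{-1})$, and the left dual ${}^*V$ built from (WGF4) supplies the left inverse, so no passage to submatrices is required. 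Your derivation of the antipode formula, and the consistency check $\sum_k t_i^k\,\SS(t_k^j)=\delta_i^j$ via Proposition \ref{propFila}, is correct and matches the paper; but it only applies once the existence of $\SS$ has been secured, which your rigidity argument does not achieve.
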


\begin{proof} 
Identify the elements of $A$ with their image in $H(c)$ 
under the canonical map. Let 
$M \in {}^{H(c)}\M$  
and fix a basis $\{m_1,\dots, m_k\}$ of $M$ as vector space.
Denote by $h_i ^k\in H(c)$ the elements such that the coaction on $m_{i}$ is given by
$
\lambda(m_i)=\sum_j h_i ^j\ot m_j
$.
If all $h_i ^j$ belong to the image of $A$ under the canonical map, then clearly $M$ is an $A$-comodule.
Since $D$ is normal,
 each $h_ i^j$ can be written as a polynomial in $D^{-1}$ with coefficients in 
(the image of) $A$, say:
\[
h_i^j=\sum_ {k=0}^{N_{ ij}}a^{ ij}_kD^{-k} \qquad \text{ with }a^{ij} \in A\text{ for all }1\leq i,j\leq n.
\]
Thus, for all $ N\geq N_{ ij}$ we have
$
h_i^jD^{N}\in A
$.
Taking $N=\max_{i,j}\{N_ {ij}\}$, we have that
\[
\wt M = M\ot \Bbbk \b^{\ot N}=  M\ot \overbrace{ \Bbbk \b\ot\cdots\ot \Bbbk \b}^{N-\text{times}}
\]
is an $A$-comodule: 
a basis is $\{m_ i\ot \b\ot \b\ot\cdots \ot \b\}_{1\leq i\leq n}$, and
the $A$-comodule structure is
\[
\lambda (m_ i\ot \b\ot \b\ot\cdots \ot \b)=
\sum_j h_ i^j D^N\ot 
m_ i\ot \b\ot \b\ot\cdots \ot \b \in A\ot \wt M
\]
Now using that $\Bbbk \b\ot \Bbbk D^{-1}\cong \Bbbk$ we have that 
$ \wt M\ot \overbrace{\Bbbk D^{-1} \cdots \ot \Bbbk D^{-1}}^{N-\text{times}}\cong M$.
That is, $M$ is isomorphic to a tensor product of an $A$-comodule and the 
$H(c)$-comodule $\Bbbk D^{ -1}$. Thus ${}^{H(c)}\M$ is tensorially generated by $A(c)$-comodules and $\Bbbk D^{-1}$.
Since ${}^{A(c)}\M$ is tensorially generated by $V$, the first assertion of the statement follows.

Finally, we prove
the formula for the antipode.
Since $D$ and $D^{-1}$ are group-like, 
we have 
 $\SS(D)=D^{-1}$ and $\SS(D^{-1})=D$. 
For the generators $t_i^j$
we proceed as follows: from Proposition \ref{propFila} we know that
$
\sum\limits_{k}t_i^kT^j_ k = \delta_ i^j D
$. 
So, in $H(c)$ it holds that
\[
\sum\limits_{k}t_i^kT^j_ kD^{-1} = \delta_ i^j \qquad \text{ for all }1\leq i,j\leq n.
\]
Now, since $H(c)$ is a Hopf algebra, we must have that 
$
\sum_\ell \SS(t_i^\ell)t_\ell^k=\varepsilon (t_i^k)=\delta_i^k$.
From one side one gets
\[
(*):=\sum_ k\sum_ \ell \SS(t_i^\ell)t_\ell^kT^j_ k D^{-1}
=\sum_{k}\delta_i ^k T^j_k D^{-1}
=T^j_i D^{-1},
\]
but, changing the order of the double sum, we obtain
\[
(*)=\sum_\ell\sum_k \SS(t_i^\ell)t_\ell^k T^j_k D^{-1}
=\sum_\ell \SS(t_i^\ell) \delta_\ell^j
= \SS(t_i^j).
\]
Consequently, $\SS(t_i^j)=T_i^jD^{-1}$ for all $1\leq i,j\leq n$.

\end{proof}

\begin{rem}
The antipode verifies both $\SS *\id=u\varepsilon$ and $\id*\SS=u\varepsilon$, so
we also have
\[
\sum\limits_{k}T_i^ k D^{-1}t_k^j= \delta_ i^j.
\]
In particular, if $D$ is central, in addition to
$\sum\limits_{k}t_ i^ k T_k^j= \delta_ i^jD $, one must
have
$
\sum\limits_{k}T_ i^ k t_k^j= \delta_ i^jD.
$
In the general case, it holds that 
\[
\sum\limits_{k}\J(T_i^ k)t_k^j= \delta_ i^j D,
\]
where $\J$ is as in Lemma \ref{lemaHayashi}.
It would be interesting to have a direct proof of this fact in $A(c)$.
 \end{rem}

 Usually, one does not know \textit{a priori} if $\iota: A\to H(c)$ is injective, and there 
are examples where this map actually have non-zero kernel. Also, it is difficult to
check by computer if the element $D$ is a zero divisor or not. 
We give below a "computer adapted version" of Theorem \ref{teomain},
without the assumption of the canonical map $\iota: A\to H(c)$ being injective.

\begin{teo}\label{teomain-new}
Assume the following equality holds in $A(c)$ for all $1\leq i,j\leq n$:
\begin{equation}\label{eq:teocompfirendly}
\sum\limits_{k=1}^{n}\J(T_i^k)t^j_ k = \delta_ i^j D.
\end{equation}
Then $H(c)$ is a coquasitriangular Hopf algebra and the formula for the antipode  on generators is given by
$\SS(D^{-1})=D$, and
$
\SS(t_i^j):=T_i^jD^ {-1}
$ for all $1\leq i,j\leq n$.
\end{teo}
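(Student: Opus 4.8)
The plan is to bypass the injectivity hypothesis of Theorem~\ref{teomain} by constructing the antipode directly on $H(c)$ using relation \eqref{eq:teocompfirendly} as the replacement for Proposition~\ref{propFila}. First I would recall that $H(c)$ is already a coquasitriangular bialgebra by Lemma~\ref{lem:H(c)bialgebra}, so it remains only to produce a two-sided convolution inverse $\SS$ for $\id_{H(c)}$. Since $H(c)$ is generated as an algebra by $\{t_i^j\}$ and $D^{-1}$, it suffices to define $\SS$ on these generators, check it respects the defining relations \eqref{eq:FRT} and \eqref{eq:D} (so that it extends to an algebra anti-homomorphism $H(c)\to H(c)$), and then verify the antipode axioms on generators, since both sides of $\SS*\id=u\eps=\id*\SS$ are algebra (anti-)maps agreeing on a generating set.

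The key steps, in order: (1) Set $\SS(D^{-1}):=D$ and $\SS(t_i^j):=T_i^jD^{-1}$, where $T_i^j\in A(c)$ is as in Notation~\ref{not:Tij}; these make sense as elements of $H(c)$ via the canonical map $\iota$. (2) Check the antipode identity $\id*\SS=u\eps$ on $t_i^j$: this reads $\sum_k t_i^k\,\SS(t_k^j)=\sum_k t_i^k T_k^j D^{-1}=\delta_i^j$, which is exactly Proposition~\ref{propFila} followed by $DD^{-1}=1$. (3) Check $\SS*\id=u\eps$ on $t_i^j$: this reads $\sum_k \SS(t_i^k)\,t_k^j=\sum_k T_i^k D^{-1}t_k^j=\delta_i^j$. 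Now using $D^{-1}a=\J(a)D^{-1}$ for all $a\in A$ (which holds in $H(c)$ because $Da=\J(a)D$ by Lemma~\ref{lemaHayashi} and $D$ is invertible), one has $T_i^kD^{-1}t_k^j=\J(T_i^k)\,D^{-1}t_k^j\cdot$... more carefully, $\sum_k T_i^k D^{-1}t_k^j = \sum_k \J(T_i^k)\J(t_k^j)D^{-1}$ is not quite right either; instead write $D^{-1}t_k^j = \J(t_k^j)D^{-1}$, so $\sum_k T_i^k D^{-1} t_k^j$ needs to be reorganized. The clean route is: apply $\J^{-1}$ — equivalently, multiply \eqref{eq:teocompfirendly} on the right by $D^{-1}$ in $H(c)$ to get $\sum_k \J(T_i^k)\,t_k^j D^{-1}=\delta_i^j$, and then conjugate, or directly observe that \eqref{eq:teocompfirendly} rearranges to $\sum_k T_i^k\,(D^{-1}t_k^j D)\,D^{-1}=\delta_i^j$ using $\J(a)=Da D^{-1}$ in $H(c)$, i.e. $\sum_k T_i^k D^{-1}t_k^j = \delta_i^j$, which is precisely $\SS*\id=u\eps$. (4) Check the antipode axioms on $D^{-1}$, which is immediate since $D,D^{-1}$ are grouplike. (5) Verify $\SS$ is compatible with \eqref{eq:FRT} and \eqref{eq:D}: relation \eqref{eq:D} is preserved since $\SS(D)\SS(D^{-1})=D^{-1}D=1$; for \eqref{eq:FRT} one can invoke the general fact that in a bialgebra where a two-sided convolution inverse of the identity exists on a generating set and extends anti-multiplicatively, the bialgebra is automatically Hopf — or, more concretely, once steps (2)--(4) give $\id*\SS=u\eps=\SS*\id$ on generators, both composites are algebra maps $H(c)\to H(c)$ equal to $u\eps$ on a generating set, hence everywhere, which also forces the anti-multiplicativity/well-definedness of $\SS$.

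The main obstacle is step (5), i.e. making rigorous the passage from ``$\SS$ defined on generators satisfies the antipode equations on generators'' to ``$\SS$ is a well-defined algebra anti-endomorphism of $H(c)$ and hence a genuine antipode.'' The subtlety is that $T_i^j$ is defined in terms of the WGF-structure and one must know the assignment $t_i^j\mapsto T_i^jD^{-1}$ is compatible with \eqref{eq:FRT}; the slick way around this is to note that relation \eqref{eq:teocompfirendly} together with Proposition~\ref{propFila} says that in $H(c)$ the matrix $(t_i^j)$ has a two-sided inverse, namely $(\J(T_i^k)D^{-1})$ on one side and $(T_i^kD^{-1})$ on the other, and these agree; a bialgebra generated by the entries of a matrix that is invertible over the bialgebra, with the standard matrix coalgebra structure, is a Hopf algebra with antipode given by that inverse — this is the standard argument (cf. the $\mathcal H(c)$ discussion in the Introduction, where $\SS(\frak t_i^j)=\frak u_i^j$). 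The coquasitriangularity then transfers from the bialgebra structure already established in Lemma~\ref{lem:H(c)bialgebra}. I would present this last step by citing the universal property of $\mathcal H(c)$ or, preferably, by the direct ``invertible-matrix-of-generators'' lemma to keep the argument self-contained.
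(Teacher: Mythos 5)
Your proposal identifies the same key verifications as the paper's proof --- the candidate antipode $\SS(t_i^j)=T_i^jD^{-1}$, the identity $\sum_k t_i^kT_k^jD^{-1}=\delta_i^j$ from Proposition~\ref{propFila}, and the identity $\sum_k T_i^kD^{-1}t_k^j=D^{-1}\sum_k\J(T_i^k)t_k^j=\delta_i^j$ from hypothesis \eqref{eq:teocompfirendly} --- and your fallback route is in fact exactly what the paper does: it defines an algebra map $\varphi:\mathcal{H}(c)\to H(c)$ by $\mathfrak{t}_i^j\mapsto t_i^j$, $\mathfrak{u}_i^j\mapsto T_i^jD^{-1}$, checks the defining relations \eqref{eq:relationsPeter} of Schauenburg's universal coquasitriangular Hopf algebra via precisely those two identities, observes that $\varphi$ is surjective (its image contains $D^{-1}=\varphi(\mathcal{D}^{-1})$), and concludes that $H(c)$, being a surjective bialgebra image of a Hopf algebra, is itself a Hopf algebra; coquasitriangularity and the antipode formula then follow. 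So the substance of your argument is right and the genuine difficulty is correctly located in your step (5).

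Two criticisms. First, the sub-argument ``both composites $\id*\SS$ and $\SS*\id$ are algebra maps equal to $u\eps$ on a generating set, hence everywhere, which also forces well-definedness of $\SS$'' is circular: for an assignment of $\SS$ on generators that is not yet known to extend, $\id*\SS$ is not an algebra map, and even granting that $\SS$ extends anti-multiplicatively, showing that $\{b:\sum b_{(1)}\SS(b_{(2)})=\eps(b)1\}$ is closed under products presupposes exactly the global well-definedness (compatibility with \eqref{eq:FRT}) that is in question. Second, your preferred ``self-contained'' route --- a bialgebra generated by a subcoalgebra on which the inclusion admits a two-sided convolution inverse is a Hopf algebra --- is a true but genuinely nontrivial folklore lemma (essentially Takeuchi/Manin), which you would have to prove or cite; note also that it must be applied to the subcoalgebra $\Bbbk 1+\operatorname{span}\{t_i^j\}+\Bbbk D^{-1}$ rather than to the matrix coalgebra alone, since $D^{-1}$ is an extra algebra generator of $H(c)$. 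The paper outsources the same difficulty to Schauenburg's theorem that $\mathcal{H}(c)$ is a Hopf algebra, which it has already set up in the introduction, so that is the economical fix; either way the step cannot be waved through.
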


\pf 
Define an algebra map $\varphi: \mathcal{H}(c)\to H(c)$ by
$\varphi(\frak{t}_i^j)= t_i^j$ and $\varphi(\frak{u}_i^j)= T_i^jD^{-1}$  for all $1\leq i,j\leq n$.
It is clear that the FRT relations \eqref{eq:FRT} and \eqref{eq:relationsPeter} are the same, also
\[\varphi\big(
\sum_k \frak{t}_i^k \frak{u}_k^j\big)=
\sum_k t_i^k T_k^jD^{-1}=\delta_i ^j DD^{-1}=\delta_i^j.
\]
For the remaining  relations,
notice that for $a\in H(c)$, we have
$D^{-1}\J(a)= aD^{-1}$
and so
\[\varphi\big(
\sum_k \frak{u}_i^k \frak{t}_k^j\big)=
\sum_k T_i^jD^{-1} t_k^j=
\sum_k D^{-1}\J(T_i^j) t_k^j=
D^{-1}\sum_k \J(T_i^j) t_k^j=
D^{-1}\delta_i ^j D=\delta_ i ^j
\]
This implies that $\varphi$ is a well-defined algebra map. Moreover, by \ref{not:Tij} it follows that 
$\varphi$ is indeed a bialgebra map.
In particular, $\B$ is also a weakly graded-Frobenius algebra for
$\mathcal{H}(c)$ and there exists a group-like element $\mathcal{D}$ on $\mathcal{H}(c)$
which is mapped to $D$. Since $\mathcal{H}(c)$ is a Hopf algebra, $\mathcal{D}^{-1}$ is a group-like element 
whose image $D^{-1}$ is contained in the image of $ \varphi$.
Consequently, $f$ is surjective and $H(c)$ is a Hopf algebra. By the universal property of $\mathcal{H}(c)$, 
it follows that $H(c)$ is also coquasitriangular.

The formula for the antipode follows the same lines as in the proof of Theorem \ref{teomain}. 
\epf

We end this section with two
corollaries.  The first one states that actually
 both Hopf algebra $H(c)$ and $\mathcal{H}(c)$ coincide. The second one is a resume that
 stresses the results for Nichols algebras.

\begin{corollary}\label{cor:H=HPeter}
Assume that \eqref{eq:teocompfirendly} holds in $A(c)$ for all $1\leq i,j\leq n$. Then
 $H(c)$ and $ \mathcal{H}(c)$ are isomorphic as Hopf algebras.
\end{corollary}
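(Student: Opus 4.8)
The plan is to build the isomorphism from the bialgebra map $\varphi:\mathcal{H}(c)\to H(c)$ already constructed in the proof of Theorem~\ref{teomain-new}, and produce an inverse bialgebra map $\psi: H(c)\to \mathcal{H}(c)$ so that $\varphi\circ\psi=\id$ and $\psi\circ\varphi=\id$. First I would use the universal property of $A(c)$: since $V$ is an $\mathcal{H}(c)$-comodule via $\lambda(x_i)=\sum_j \frak{t}_i^j\ot x_j$ and $c$ is $\mathcal{H}(c)$-colinear (by \eqref{eq:relationsPeter}), there is a unique bialgebra map $A(c)\to\mathcal{H}(c)$, $t_i^j\mapsto \frak{t}_i^j$. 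To extend it to $H(c)=A(c)[D^{-1}]$, I must check that the image of the quantum determinant $D$ is invertible in $\mathcal{H}(c)$. This is exactly the content of the remark after Theorem~\ref{teomain-new}, or can be seen directly: $\mathcal{H}(c)$ being a Hopf algebra and $\mathcal{D}$ (the image of $D$) being group-like forces $\mathcal{D}^{-1}=\SS_{\mathcal{H}(c)}(\mathcal{D})$ to exist. Hence the localization's universal property (Definition~\ref{def:localization}) yields a unique algebra map $\psi:H(c)\to\mathcal{H}(c)$ with $\psi(t_i^j)=\frak{t}_i^j$ and $\psi(D^{-1})=\mathcal{D}^{-1}$, and one checks it is a bialgebra map since it sends group-likes to group-likes and respects $\Delta$ on generators.

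Next I would verify the two compositions are identities. The composite $\psi\circ\varphi:\mathcal{H}(c)\to\mathcal{H}(c)$ sends $\frak{t}_i^j\mapsto t_i^j\mapsto \frak{t}_i^j$, so it is the identity on the subalgebra generated by the $\frak{t}_i^j$; for the $\frak{u}_i^j$ I use $\varphi(\frak{u}_i^j)=T_i^jD^{-1}$ and then need $\psi(T_i^jD^{-1})=\frak{u}_i^j$. Since $T_i^j\in A(c)$ is a specific polynomial in the $t_k^\ell$, its image under $\psi$ is the same polynomial in the $\frak{t}_k^\ell$, call it $\widetilde T_i^j$; the relation $\sum_k t_i^k T_k^j=\delta_i^j D$ from Proposition~\ref{propFila} transports to $\sum_k \frak{t}_i^k \widetilde T_k^j=\delta_i^j\mathcal{D}$ in $\mathcal{H}(c)$, and multiplying by $\mathcal{D}^{-1}$ and using $\sum_k\frak{t}_i^k\frak{u}_k^j=\delta_i^j$ together with the fact that $\mathcal{H}(c)$ is a Hopf algebra gives $\widetilde T_i^j\mathcal{D}^{-1}=\frak{u}_i^j$ (the same cofactor argument as in Theorem~\ref{teomain}). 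So $\psi\circ\varphi=\id$. Conversely $\varphi\circ\psi:H(c)\to H(c)$ fixes each $t_i^j$ and sends $D^{-1}\mapsto \mathcal{D}^{-1}\mapsto D^{-1}$; since $H(c)$ is generated as an algebra by the $t_i^j$ and $D^{-1}$, it is the identity.

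The main obstacle is ensuring that $\psi$ is genuinely well-defined as a bialgebra map and in particular that it is compatible with the antipode once both sides are known to be Hopf algebras --- but here the Hopf structure comes for free. The delicate point is really the chain of identifications $\psi(T_i^j)=\widetilde T_i^j$ and the cancellation $\widetilde T_i^j\mathcal{D}^{-1}=\frak{u}_i^j$: one must be careful that $T_i^j$ is defined (Notation~\ref{not:Tij}) purely in terms of the comodule structure of $\B$ over $A(c)$, and that applying $\psi$ commutes with that construction because $\B$ is also a WGF-algebra for $\mathcal{H}(c)$ with the same volume element (as noted in the proof of Theorem~\ref{teomain-new}). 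Once this bookkeeping is in place, the rest is the formal two-sided-inverse argument. An alternative, shorter route avoiding any recomputation is to invoke the universal property of $\mathcal{H}(c)$ directly: $H(c)$ is a coquasitriangular bialgebra (Lemma~\ref{lem:H(c)bialgebra}) on which $V$ is a comodule, so there is a coquasitriangular bialgebra map $\mathcal{H}(c)\to H(c)$, which must coincide with $\varphi$; and $H(c)$ being a Hopf algebra on which $V$ is rigid, the universal property of $\mathcal{H}(c)$ as \emph{the} such Hopf algebra gives an inverse --- but making ``universal property'' precise enough to conclude an isomorphism still amounts to the explicit inverse constructed above, so I would present the explicit version.
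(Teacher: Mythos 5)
Your proposal is correct and follows essentially the same route as the paper: the paper also takes the surjection $\varphi$ from the proof of Theorem~\ref{teomain-new}, defines the reverse map $f:H(c)\to\mathcal{H}(c)$ by $t_i^j\mapsto\frak{t}_i^j$, $D^{-1}\mapsto\mathcal{D}^{-1}$, and shows $f(T_i^j)=\frak{u}_i^jD$ by left-multiplying the identity $\bt\cdot T=D\,I$ by $\frak{u}$ --- the matrix form of your cofactor argument. Your extra care in justifying well-definedness of $\psi$ via the universal properties of $A(c)$ and of the localization is a welcome refinement of the paper's ``clearly well-defined,'' but not a different proof.
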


\begin{proof}
By the proof of Theorem \ref{teomain-new}, we know that there is a surjective 
Hopf algebra map $\varphi: \mathcal{H}(c) \to H(c)$ such that 
$\varphi(\mathcal{D})= D$,  $\varphi(\frak{t}_{i}^{j}) = t_{i}^{j}$ and $\varphi(\frak{u}_i^j)= T_i^jD^{-1}$ for all 
$1\leq i,j \leq n$, where $\mathcal{D}$ is the quantum determinant of $\mathcal{H}(c)$ associated with $\B$.
To prove that both algebras coincide, we show that $\varphi$ is bijective. Define the algebra map ${f}:H(c) \to \mathcal{H}(c)$ by 
${f}(t_{i}^{j}) = \frak{t}_{i}^{j}$ and ${f}(D^{-1}) = \mathcal{D}^{-1}$. Clearly, it is a
well-defined bialgebra map, which satisfies that $\frak{u}_{i}^{j}= f(T_{i}^{j}D^{-1})$.
Indeed,
define the matrices $\bt, T,\J( T), \mathbf{\frak{t}}$ and $\mathbf{\frak{u}}$ by
$(\bt)_{ij}=t_i^j$, $(T)_{ij}=T_i^j$, $(\J (T))_{ij}=\J(T_i^j)$,
$(\frak{t})_{ij}=\frak{t}_i^j$ and $(\frak{u})_{ij}=\frak{u}_i^j$. In particular, 
by Proposition \ref{propFila} and our assumptions, we know that 
\[
f(\bt) = \frak{t},\qquad \frak{u}\cdot \frak{t} = \id = \frak{t}\cdot \frak{u}, \qquad \bt\cdot T=D I \qquad\text{ and }\qquad  \J( T)\cdot \bt= D I.
\]
Thus, 
\[
f(T)=(\frak{u}\cdot \frak{t})\cdot  f(T)=\frak{u}\cdot(f(\bt)\cdot f(T))=\frak{u}\cdot(f(\bt\cdot T)) = \frak{u} D.\]
Namely, $f(T_i^j)= \frak{u}_i^jD$ and so $\frak{u}_i^j=f(T_i^jD^{-1})$. Hence,
we conclude that the algebra map $f:H(c) \to \mathcal{H}(c)$
is surjective.
Since by definition we have that $f\circ \varphi=\id$ and $\varphi\circ f=\id$, the claim is proved.
\end{proof}

\begin{corollary}
Let  $(V,c)$ be a rigid finite-dimensional braided vector space such that the associated Nichols algebra 
$\B(V,c)$ is finite-dimensional. If the canonical map $\iota: A(c)\to H(c)$ is injective 
or equation \eqref{eq:teocompfirendly} is satisfied, then 
$\B(V,c)$ is a braided Hopf algebra in ${}^{H(c)}\M$.
\end{corollary}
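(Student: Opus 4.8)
The plan is to bootstrap the braided Hopf algebra structure from the results already established. We know from Proposition~\ref{prop:B-A-comodulo} that $\B=\B(V,c)$ is an $A(c)$-comodule algebra, and since $\iota:A(c)\to H(c)$ is a bialgebra map (or, under \eqref{eq:teocompfirendly}, since $\varphi:\mathcal{H}(c)\to H(c)$ is one and $A(c)$ maps into $H(c)$), pushing the coaction forward along $\iota$ makes $\B$ an $H(c)$-comodule algebra. Under either hypothesis, Theorem~\ref{teomain} or Theorem~\ref{teomain-new} tells us that $H(c)$ is a coquasitriangular Hopf algebra, so by Remark~\ref{rmk:D-coquasi}$(d)$ the category ${}^{H(c)}\M$ is braided monoidal. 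The claim to prove is precisely that $\B$, viewed as an algebra object in this braided category, also carries a compatible coalgebra structure making it a Hopf algebra therein.

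First I would recall the standard fact that the Nichols algebra $\B(V,c)$ is intrinsically a braided Hopf algebra: in the braided category of Yetter--Drinfeld modules (or in any braided category in which $V$ is realized with braiding $c$), $\B(V,c)$ is the image of the tensor algebra $TV$, which is a braided Hopf algebra with $V$ primitive, under the canonical projection, and it is a braided Hopf algebra generated in degree one with all primitives in degree one. So the only thing to check is that the braiding induced on $V$ by the $H(c)$-comodule structure (via the coquasitriangular form $r$, as in Remark~\ref{rmk:D-coquasi}$(d)$) coincides with the original $c$. This is immediate from the FRT construction: the coaction $\lambda(x_i)=\sum_j t_i^j\ot x_j$ together with $r(t_i^k,t_j^\ell)=c_{ji}^{k\ell}$ gives $c_{V,V}(x_i\ot x_j)=\sum_{k,\ell}r(t_i^k,t_j^\ell)x_\ell\ot x_k=\sum_{k,\ell}c_{ji}^{k\ell}x_\ell\ot x_k=c(x_i\ot x_j)$, after reindexing. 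Hence $V\in{}^{H(c)}\M$ realizes $(V,c)$ as a braided vector space inside ${}^{H(c)}\M$, and the coproduct, counit and antipode of $\B(V,c)$ are morphisms in ${}^{H(c)}\M$ because they are built functorially from $V$ and $c$ via the quantum symmetrizers $QS_n$, which are $A(c)$-colinear by Remark~\ref{lem:lambda-colineal} and hence $H(c)$-colinear.

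The main obstacle, and the point deserving care, is verifying that the comultiplication $\underline{\Delta}:\B\to\B\ot\B$ (the ``shuffle'' coproduct, with $\ot$ here the braided tensor product in ${}^{H(c)}\M$) is indeed $H(c)$-colinear — equivalently, that the braided tensor product algebra structure on $\B\ot\B$ in ${}^{H(c)}\M$ agrees with the one used in the Nichols-algebra definition. This reduces to checking that the flip-with-braiding $c_{\B,\B}$ on $\B\ot\B$ induced by $r$ restricts correctly in each graded component; since $\B$ is generated in degree one and the braiding in ${}^{H(c)}\M$ is determined on $V\ot V$ (where it equals $c$ as computed above) and extended multiplicatively, this agreement propagates to all of $\B\ot\B$. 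I would therefore organize the proof as: (1) transport the comodule-algebra structure along $\iota$; (2) identify the induced braiding on $V$ with $c$; (3) invoke the intrinsic braided-Hopf-algebra structure of $\B(V,c)$ and note that all structure maps are assembled from $V$, $c$, and $QS_n$, hence are $H(c)$-colinear; (4) conclude. No separate verification of the Hopf axioms is needed, as these hold in the abstract Nichols algebra and are categorical identities preserved under the above identifications.
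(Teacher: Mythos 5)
Your proposal is correct and follows essentially the same route as the paper, which simply cites Proposition~\ref{prop:B-A-comodulo}, the fact that $\iota:A(c)\to H(c)$ is a bialgebra map, and the fact that $\B(V,c)$ is a braided Hopf algebra in ${}^{A(c)}\M$. You merely spell out the details the paper leaves implicit (that the coquasitriangular braiding restricted to $V$ recovers $c$ and that the structure maps of $\B(V,c)$ are colinear), which is fine.
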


\pf Follows from Proposition \ref{prop:B-A-comodulo}, the fact that $\iota: A(c) \to H(c)$ is
a bialgebra map and that $\B(V,c)$ is a braided Hopf algebra in ${}^{A(c)}\M$.
\epf

\begin{question} If the canonical map $\iota: A\to H(c)$ is injective, 
one can easily see that the hypothesis of Theorem
\ref{teomain-new} is superfluous. We do not know if it is actually
superfluous in general, or at least 
superfluous in some situation, \textit{e.g.} noetherianity.
\end{question}

\subsection{Formulas for the quantum determinant for set-theoretical solutions}
In this subsection, we give an explicit formula for the quantum determinant in the case 
where the braided vector space is given
by a linearization of a set-theoretical solution of the braid equation. 
Using the coquasitriangular structure, we also give the commuting relations between the 
quantum determinant and the generators of the bialgebra.

Let $(X,c)$ be a set-theoretical solution of the braid equation and write $$c(i,j) = (g_{i}(j),f_{j}(i))\qquad \text{ for all }i,j\in X,$$ 
where $f,g: X\to \Fun(X,X)$. We say that 
the solution $(X,c)$ is \textit{non-degenerate} if the images of $f$ and $g$ are bijections.
Let $V=\Bbbk X$ be the vector space linearly spanned by $X$ and consider the map
on $V$, also written as $c$, obtained by linearizing $c$. Then $(V,c)$ is a braided vector space. Set $\{x_{i}\}_{i\in X}$ for the linear basis of $V$. A 
map $\mathbf{q}:X \times X \to \Bbbk$ is called a \textit{cocycle} if the map 
$c^{\mathbf{q}}: V\ot V \to V\ot V
$ given by $$c^{\mathbf{q}}(x_{i}\ot x_{j})=q_{ij}x_{g_{i}(j)}\ot x_{f_{j}(i)} \qquad \text{ for all }i,j\in X,$$ 
where $q_{ij}=q(i,j)$, is a solution of the braid equation. It turns out that the braiding $c^{\mathbf{q}}$ is rigid  if and only if $c$ is non-degenerate, see 
\cite[Lemma 5.7]{AG2}.

Assume $|X|=n$ and let  $(c_{ij}^{k\ell})_{i,j,k,\ell\in X}$ be the $(n^{2}\times n^{2})$-matrix given by 
$c^{\mathbf{q}}(x_i\ot x_j)=\sum_{k,\ell}c_{ij}^{k\ell}x_k\ot x_\ell$. By the formula above, it follows that 
$$c_{ij}^{k\ell} = q_{ij}\delta_{k,g_{i}(j)}\delta_{\ell,f_{j}(i)}.$$
Let $\B(V,c^{\mathbf{q}})$ be the Nichols algebra associated with the rigid braided vector space $(V,c^{\mathbf{q}})$.
If $\dim \B(V,c^{\mathbf{q}})$ is finite, then $\B(V,c^{\mathbf{q}}) =\B= \bigoplus_{i=0}^{N} \B^{i}$ with 
$\dim \B^{N} = 1$. Assume further that $\B^{N} = \Bbbk \b$;  with
$\b$ a "volume element". Then for any element
$x_{i_{1}}\cdots x_{i_{N}} \in \B^{N}$ with $x_{i_{s}} \in V$, there 
exists $\alpha_{i_{1},\ldots , i_{N}} \in \Bbbk$ such that 
$x_{i_{1}}\cdots x_{i_{N}} =  \alpha_{i_{1},\ldots , i_{N}}\b$.

\begin{prop}\label{prop:formdet}
Assume $\b=x_{j_{1}}\cdots x_{j_{N}}$ with $x_{j_{s}} \in V$ for all 
$1\leq s\leq N$. Then 
$$D = \sum_{1\leq  {i_1,\dots,i_N}\leq n} \alpha_{i_{1},\ldots , i_{N}} t_{j_{1}}^{i_{1}}t_{j_{2}}^{i_{2}}\cdots t_{j_{N}}^{i_{N}}.$$
\end{prop}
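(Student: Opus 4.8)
The plan is to compute the coaction $\lambda(\b)$ directly from the hypothesis $\b = x_{j_1}\cdots x_{j_N}$ and read off the group-like element $D$ from the defining identity $\lambda(\b) = D\ot \b$.

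First I would use that $\B$ is an $A(c)$-comodule algebra (Proposition \ref{prop:B-A-comodulo}), so that $\lambda$ is multiplicative:
\[
\lambda(\b)=\lambda(x_{j_1}\cdots x_{j_N})=\lambda(x_{j_1})\cdots \lambda(x_{j_N})
=\Big(\sum_{i_1}t_{j_1}^{i_1}\ot x_{i_1}\Big)\cdots\Big(\sum_{i_N}t_{j_N}^{i_N}\ot x_{i_N}\Big)
=\sum_{1\leq i_1,\dots,i_N\leq n} t_{j_1}^{i_1}t_{j_2}^{i_2}\cdots t_{j_N}^{i_N}\ot x_{i_1}x_{i_2}\cdots x_{i_N},
\]
where the products $x_{i_1}\cdots x_{i_N}$ are taken in $\B$, hence land in $\B^N=\Bbbk\b$. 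Next I would substitute $x_{i_1}\cdots x_{i_N}=\alpha_{i_1,\dots,i_N}\b$, which is exactly the definition of the scalars $\alpha_{i_1,\dots,i_N}$ introduced just before the statement, obtaining $\lambda(\b)=\big(\sum_{i_1,\dots,i_N}\alpha_{i_1,\dots,i_N}\,t_{j_1}^{i_1}\cdots t_{j_N}^{i_N}\big)\ot\b$. Comparing with $\lambda(\b)=D\ot\b$ and using that $\b\neq 0$ gives the claimed formula.

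There is essentially no obstacle here: the only point requiring a word is that the expression obtained is well-defined, i.e. that it does not depend on how one writes $\b$ as a monomial — but this is automatic since $D$ is characterized by $\lambda(\b)=D\ot\b$ and the definition of the quantum determinant already records that $D\in G(A)$ is independent of the scalar multiple of $\b$. One should also note, for completeness, that $D$ is indeed group-like: this follows from $\B^N$ being a one-dimensional $A(c)$-subcomodule, which forces $\com(D)=D\ot D$ and $\eps(D)=1$, as already observed when the quantum determinant was defined. Thus the proof amounts to the multiplicativity computation above; no further estimates or structural arguments are needed.
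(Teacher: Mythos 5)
Your proposal is correct and follows exactly the paper's own argument: expand $\lambda(\b)$ multiplicatively using that $\B$ is an $A(c)$-comodule algebra, substitute $x_{i_1}\cdots x_{i_N}=\alpha_{i_1,\dots,i_N}\b$, and compare with $\lambda(\b)=D\ot\b$. No gaps; the extra remarks on well-definedness and group-likeness are fine but not needed.
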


\begin{proof}
 Since $\lambda(x_{i}) = \sum_{i=1}^{n} t_{i}^{j}\ot x_{j}$ for all $1\leq j\leq n$, we have that 
 \begin{align*}
\lambda(\b) = \lambda(x_{j_{1}}\cdots x_{j_{N}}) & = \sum_{1\leq i_1,\dots,i_N\leq n} t_{j_{1}}^{i_{1}}t_{j_{2}}^{i_{2}}\cdots 
 t_{j_{N}}^{i_{N}}\ot x_{i_{1}}x_{i_{2}}\cdots x_{i_{N}}\\ 
& = \sum_{1\leq i_1,\dots,i_N\leq n}
\alpha_{i_{1},\ldots , i_{N}}t_{j_{1}}^{i_{1}}t_{j_{2}}^{i_{2}}\cdots 
 t_{j_{N}}^{i_{N}}\ot \b \\
 \end{align*}
Since $\lambda(\b) = D\ot \b$, the assertion follows.
\end{proof}

Next we give some formulas concerning the quantum determinant
 and the coquasitriangular structure.

\begin{prop} Let $1\leq a,b \leq n$ and denote recursively 
$a_{1} = f_{j_{1}}(a)$ and $a_{k} = f_{j_{k}}(a_{k-1})$. Then
$$r(D,t_{a}^{b}) = \delta_{b,a_{N}} \alpha_{g_{a}(j_{1}),g_{a_{1}}(j_{2}))\ldots , g_{a_{N-1}}(j_{N})} 
q_{j_{1},a}q_{j_{2}, f_{a}(j_{1})}\cdots q_{j_{N},a_{N-1}} .$$
\end{prop}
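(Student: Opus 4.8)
The plan is to compute $r(D, t_a^b)$ directly from the formula $D = \sum_{1\leq i_1,\dots,i_N\leq n} \alpha_{i_1,\dots,i_N} t_{j_1}^{i_1}\cdots t_{j_N}^{i_N}$ obtained in Proposition \ref{prop:formdet}, together with the defining identities $r(t_i^k, t_j^\ell) = c_{ji}^{k\ell}$ and the multiplicativity property (CQT1), which by Remark \ref{rmk:D-coquasi}$(a)$ tells us that $r(-, t_a^b) \colon A \to \Bbbk$ is an algebra map. First I would apply (CQT1) repeatedly to the product $t_{j_1}^{i_1}\cdots t_{j_N}^{i_N}$ — but one must be careful: (CQT1) expands $r(xy, z)$ in terms of $z_{(1)}, z_{(2)}$, so iterating it on a length-$N$ product of generators in the \emph{first} argument produces $r(t_{j_1}^{i_1}, t_a^{b_1}) r(t_{j_2}^{i_2}, t_{b_1}^{b_2})\cdots r(t_{j_N}^{i_N}, t_{b_{N-1}}^{b})$ summed over intermediate indices $b_1,\dots,b_{N-1}$, using $\Delta(t_a^b) = \sum_k t_a^k \ot t_k^b$ iterated. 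This is the natural "telescoping" evaluation.

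Next I would substitute $r(t_{j_s}^{i_s}, t_{b_{s-1}}^{b_s}) = c_{b_{s-1} j_s}^{i_s b_s}$ (with $b_0 = a$, $b_N = b$), and then plug in the explicit cocycle form $c_{ij}^{k\ell} = q_{ij}\,\delta_{k, g_i(j)}\,\delta_{\ell, f_j(i)}$ from the subsection's setup. So each factor becomes $q_{b_{s-1}, j_s}\,\delta_{i_s, g_{b_{s-1}}(j_s)}\,\delta_{b_s, f_{j_s}(b_{s-1})}$. The Kronecker deltas in the second index force the recursion $b_s = f_{j_s}(b_{s-1})$, which with $b_0 = a$ is exactly the definition $a_s = f_{j_s}(a_{s-1})$ given in the statement; in particular the sum over $b_1,\dots,b_{N-1}$ collapses to a single term and the outermost constraint $b_N = b$ becomes $\delta_{b, a_N}$. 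The deltas in the first index force $i_s = g_{b_{s-1}}(j_s) = g_{a_{s-1}}(j_s)$, so the sum over $i_1,\dots,i_N$ also collapses, leaving the coefficient $\alpha_{g_a(j_1), g_{a_1}(j_2),\dots, g_{a_{N-1}}(j_N)}$. The surviving $q$-factors are $q_{a, j_1}\, q_{a_1, j_2}\cdots q_{a_{N-1}, j_N}$; I would just note that matching this against the claimed $q_{j_1, a} q_{j_2, f_a(j_1)}\cdots$ is a matter of which argument slot the cocycle convention puts first — one should verify the indexing is consistent with how $r$ was defined on generators (the paper writes $r(t_i^k, t_j^\ell) = c_{ji}^{k\ell}$, with the \emph{swap} in the lower indices), and that swap is precisely what converts $q_{a_{s-1}, j_s}$ into the stated $q_{j_s, a_{s-1}}$.

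The main obstacle is purely bookkeeping: keeping the index conventions straight across three layers — the coproduct iteration on $t_a^b$, the transposition built into $r(t_i^k, t_j^\ell) = c_{ji}^{k\ell}$, and the asymmetric roles of $f$ and $g$ in $c^{\mathbf q}$. There is no conceptual difficulty and no hidden lemma needed beyond (CQT1) and Proposition \ref{prop:formdet}; the risk is an off-by-one or a transposed subscript, so I would write out the $N=1$ and $N=2$ cases explicitly as a sanity check before asserting the general telescoped formula. A secondary subtlety worth a sentence: one should confirm that the computation is independent of the chosen presentation $\b = x_{j_1}\cdots x_{j_N}$ of the volume element (different presentations give the same $D$, hence the same $r(D,-)$), but this is automatic since $D$ is canonically attached to $\B^{top}$.
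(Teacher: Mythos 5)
Your proposal is correct and follows essentially the same route as the paper: expand $D$ via Proposition \ref{prop:formdet}, telescope $r(\,\cdot\,,t_a^b)$ over the product using (CQT1) and the iterated coproduct, substitute $r(t_i^k,t_j^\ell)=c_{ji}^{k\ell}$ and the explicit cocycle form of $c$, and let the Kronecker deltas collapse both sums. Your careful flagging of the index-slot convention is apt — the paper's own computation produces $q_{a,j_1}q_{a_1,j_2}\cdots$ before restating it with transposed subscripts in the final line — but this is bookkeeping, not a difference in method.
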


\begin{proof} 
 From Proposition \ref{prop:formdet} we get
 \begin{align*}
  r(D,t_{a}^{b}) & = \sum_{1\leq i_{k}\leq n} \alpha_{i_{1},\ldots , i_{N}}
  r(t_{j_{1}}^{i_{1}}t_{j_{2}}^{i_{2}}\cdots 
 t_{j_{N}}^{i_{N}}, t_{a}^{b})\\
& = \sum_{1\leq i_{k}\leq n}\sum_{1\leq \ell_{k} \leq n} \alpha_{i_{1},\ldots , i_{N}}
  r(t_{j_{1}}^{i_{1}}, t_{a}^{\ell_{1}}) r(t_{j_{2}}^{i_{2}}, t_{\ell_{1}}^{\ell_{2}})\cdots 
 r(t_{j_{N}}^{i_{N}}, t_{\ell_{N-1}}^{b}) \\
 & = \sum_{1\leq i_{k}\leq n}\sum_{1\leq \ell_{k} \leq n} \alpha_{i_{1},\ldots , i_{N}}
  c_{a,j_{1}}^{i_{1},\ell_{1}} c_{\ell_{1}j_{2}}^{i_{2},\ell_{2}}\cdots 
 c_{\ell_{N-1}j_{N}}^{i_{N}b} \\
 & = \sum_{1\leq i_{k}\leq n} \alpha_{i_{1},\ldots , i_{N}}\sum_{1\leq \ell_{k} \leq n}
  q_{a,j_{1}}\delta_{i_{1},g_{a}(j_{1})}\delta_{\ell_{1},f_{j_{1}}(a)}
  q_{\ell_{1},j_{2}}\delta_{i_{2},g_{\ell_{1}}(j_{2})}\delta_{\ell_{2},f_{j_{2}}(\ell_{1})}
\cdots \\
& \qquad\cdots  q_{\ell_{N-1},j_{N}}\delta_{i_{N},g_{\ell_{N-1}}(j_{N})}\delta_{b,f_{j_{N}}(\ell_{N-1})}\\
& = \alpha_{g_{a}(j_{1}),g_{a_{1}}(j_{2}))\ldots , g_{a_{N-1}}(j_{N})} 
q_{j_{1},a}q_{j_{2}, f_{a}(j_{1})}\cdots q_{j_{N},a_{N-1}} \delta_{b,a_{N}},
 \end{align*}
and the claim follows.
\end{proof}

\begin{remark}\label{rmk:Rqconst}
Assume $\mathbf{q}$ is a constant cocycle with $q_{ij}=q \in \Bbbk^{\times}$ for
 all $1\leq i,j \leq n$. Then 
 $$r(D,t_{a}^{b}) = \delta_{b,a_{N}} \alpha_{g_{a}(j_{1}),g_{a_{1}}(j_{2}))\ldots , g_{a_{N-1}}(j_{N})} 
q^{N}.
$$
\end{remark}

\begin{corollary}\label{coronormal}
 Let $1\leq a,b\leq n$. As before, denote $a_{k} = f_{j_{k}}(a_{k-1})$ with $a_{(0)} = a$, and set
 $b_{k} = f_{j_{k}}(b_{k-1})$ with $b_{N}=b$.
Assume $\mathbf{q}$ is a constant cocycle
 with $q_{ij}=q \in \Bbbk^{\times}$ for all $1\leq i,j \leq n$.
 Then 
 $$\alpha_{g_{a}(j_{1}),g_{a_{1}}(j_{2}))\ldots , g_{a_{N-1}}(j_{N})} 
D  t_{a_{N}}^{b} =  
\alpha_{g_{b_{0}}(j_{1}),g_{b_{1}}(j_{2}))\ldots , g_{b_{N-1}}(j_{N})} 
 t_{a}^{b_{0}} D.$$
In particular, if $\alpha_{g_{a}(j_{1}),g_{a_{1}}(j_{2}))\ldots , g_{a_{N-1}}(j_{N})} \neq 0$, we have that 
$$\J(t^{a}_{a_{N}})= \alpha_{g_{a}(j_{1}),g_{a_{1}}(j_{2}))\ldots , g_{a_{N-1}}(j_{N})} ^{-1}
\alpha_{g_{b_{0}}(j_{1}),g_{b_{1}}(j_{2}))\ldots , g_{b_{N-1}}(j_{N})} 
 t_{a}^{b_{0}}. $$
\end{corollary}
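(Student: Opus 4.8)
The plan is to derive both identities directly from the commutation rule provided by the coquasitriangular structure, namely part $(c)$ of Remark \ref{rmk:D-coquasi}, which for the group-like element $D$ reads
\[
r(D,b_{(1)})\,D\,b_{(2)} = b_{(1)}\,D\,r(D,b_{(2)})\qquad\text{ for all }b\in A(c).
\]
First I would specialize this rule to $b = t_{a_N}^{\,b}$. Using the coalgebra structure $\Delta(t_{a_N}^{\,b}) = \sum_k t_{a_N}^{\,k}\ot t_k^{\,b}$, the left-hand side becomes $\sum_k r(D,t_{a_N}^{\,k})\,D\,t_k^{\,b}$ and the right-hand side becomes $\sum_k t_{a_N}^{\,k}\,D\,r(D,t_k^{\,b})$. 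The key computational input is Remark \ref{rmk:Rqconst}, which gives $r(D,t_{a}^{\,c}) = \delta_{c,a_N}\,\alpha_{g_a(j_1),\dots,g_{a_{N-1}}(j_N)}\,q^N$ for a constant cocycle; so $r(D,t_{a_N}^{\,k})$ is nonzero only when $k$ equals the iterate of $a_N$ under the maps $f_{j_1},\dots,f_{j_N}$, and similarly $r(D,t_k^{\,b})$ is nonzero only when $k$ is the unique index whose iterate under $f_{j_1},\dots,f_{j_N}$ equals $b$, which by non-degeneracy (the $f$'s being bijections) is exactly $b_0$. Both sums therefore collapse to a single term.

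Second, I would carry out this collapse carefully, tracking the $\alpha$-coefficients. On the left side, the surviving term is at $k = (a_N)_N$ in the obvious notation, and pulling out the scalar $r(D,t_{a_N}^{\,k}) = \alpha_{g_{a_N}(j_1),\dots}\,q^N$. Wait — here I must be attentive: the statement indexes things so that $a_k = f_{j_k}(a_{k-1})$ with $a_{(0)}=a$, so $t_{a_N}^{\,b}$ is already ``shifted'', and the iterate of $a_N$ under the $f$'s is a further shift. The cleanest route is to note that in Corollary \ref{coronormal} the chosen $b$ on the left is $a_N$ and on the right the relevant index is $a = a_{(0)}$ together with $b_0$; so I would instead apply the commutation rule to $b = t_a^{\,b_0}$ and read off, via Remark \ref{rmk:Rqconst}, that $r(D,t_a^{\,c})$ is supported at $c = a_N$ with coefficient $\alpha_{g_a(j_1),g_{a_1}(j_2),\dots,g_{a_{N-1}}(j_N)}\,q^N$, while $r(D,t_c^{\,b_0})$ is supported at $c=b_0$ with coefficient $\alpha_{g_{b_0}(j_1),g_{b_1}(j_2),\dots,g_{b_{N-1}}(j_N)}\,q^N$. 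Cancelling the common factor $q^N$ from both sides then yields exactly the displayed identity
\[
\alpha_{g_a(j_1),\dots,g_{a_{N-1}}(j_N)}\;D\,t_{a_N}^{\,b}
=\alpha_{g_{b_0}(j_1),\dots,g_{b_{N-1}}(j_N)}\;t_a^{\,b_0}\,D,
\]
after renaming $b := b_N$ so that $b_0$ is its $N$-fold preimage under the $f_{j_k}$'s.

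Third, for the ``in particular'' clause, I would invoke Lemma \ref{lemaHayashi} (Hayashi's theorem): since $A(c)$ is coquasitriangular and $D$ is group-like, there is an automorphism $\J = \J_D$ with $D\,a = \J(a)\,D$ for all $a$. Substituting $a = t_{a_N}^{\,b}$ into the identity just proved and comparing with $D\,t_{a_N}^{\,b} = \J(t_{a_N}^{\,b})\,D$, one gets
\[
\alpha_{g_a(j_1),\dots,g_{a_{N-1}}(j_N)}\;\J(t_{a_N}^{\,b})\,D
=\alpha_{g_{b_0}(j_1),\dots,g_{b_{N-1}}(j_N)}\;t_a^{\,b_0}\,D,
\]
and cancelling $D$ on the right (legitimate since $D$ is invertible in $H(c)$, or is not a zero divisor in the relevant localization) and dividing by the nonzero scalar $\alpha_{g_a(j_1),\dots,g_{a_{N-1}}(j_N)}$ gives the stated formula for $\J(t^a_{a_N})$. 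I would double-check the index bookkeeping so that the superscript/subscript pattern $\J(t^a_{a_N})$ matches: the point is that $D\,t_{a_N}^{\,b}$ has the form (scalar)$\cdot t_a^{\,b_0}\cdot D$, hence $\J$ sends the ``$a_N$-labelled'' generator back to an ``$a$-labelled'' one, consistently with $\J$ being the twist induced by $D$.

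\textbf{Main obstacle.} The genuine mathematical content is light — it is essentially one application of $(CQT3)$ plus Hayashi's lemma — so the real difficulty is entirely notational: keeping straight the two chains of iterates $a_k = f_{j_k}(a_{k-1})$ and $b_k = f_{j_k}(b_{k-1})$, making sure the $\alpha$-subscripts $g_{a_{k-1}}(j_k)$ line up on each side, and confirming that non-degeneracy of the solution (bijectivity of the $f_{j_k}$) is what guarantees each collapsed sum has exactly one surviving term. I would present the computation as a short chain of equalities with the collapse step made explicit, and flag where non-degeneracy is used.
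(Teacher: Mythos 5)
Your argument is correct and is exactly the paper's proof: the paper disposes of this corollary in one line by citing the commutation rule $r(D,b_{(1)})Db_{(2)}=b_{(1)}D\,r(D,b_{(2)})$ of Remark \ref{rmk:D-coquasi}, and your write-up simply makes explicit the collapse of the two sums using the support computation $r(D,t_a^c)=\delta_{c,a_N}\alpha_{g_a(j_1),\dots,g_{a_{N-1}}(j_N)}q^N$ from the preceding proposition, together with Hayashi's lemma for the ``in particular'' clause. The only blemish is the transient claim that you apply the rule to $t_a^{b_0}$ — it should be applied to $t_a^{b}$ with $b=b_N$, so that the right-hand sum collapses at the unique $N$-fold preimage $b_0$ — but you flag the needed renaming yourself and your final displayed identity is the correct one.
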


\begin{proof}
Follows directly from Remark \ref{rmk:D-coquasi} (4).
\end{proof}

\begin{rem}
Let $(X,\trid)$ be a rack and consider the associated set-theoretical solution of the braid equation $c(x,y) = (x\trid y, x)$ for all
$x,y\in X$. Set $V=\Bbbk X$ and consider the covering group 
\[
G:=G(X,c)=\langle g_{x}: x\in X\rangle/(g_{x}g_{y}=g_{x\trid y}g_{x}).
\]
Let $\mathbf{q}$ be a $2$-cocycle and write $c^{\bq}\in \Aut(V\ot V)$ for the braiding on $V$. Then $V$ is a left $\Bbbk G$-comodule 
with structure map $\delta_{V}: V\to \Bbbk G \ot V$ given by 
$\delta_{V} (x) = g_{x}\ot x$ for all $x\in X$. Moreover, $c^{\bq}$ is a $\Bbbk G$-colinear map and,  
by the universal property of $A(c^{\bq})$, there exists a bialgebra map $f:A(c^{\bq}) \to \Bbbk G$ such that 
$(f\ot \id)\lambda_{V} = \delta_{V}$. In particular, we have that 
\[
g_{x}\ot x = \delta_{V}(x) = (f\ot \id)\lambda_{V}(x) 
= (f\ot \id)\left(\sum_{y\in X} t_{x}^{y} \ot y\right) 
=\sum_{y\in X} f(t_{x}^{y}) \ot y,
\]
which implies that $f(t_{x}^{y}) = \delta_{x,y} g_{x}$ for all $x,y \in X$. Clearly, $f$ is a well-defined 
bialgebra map. As the
FRT relations of $A(c^{\bq})$ are given 
by 
$ q_{ij}\, t_{i\rhd j}^{\ell \rhd k}t_{i}^{\ell} =  q_{\ell,k}\, t_{i}^{\ell}t_{j}^{k}$ for all $i,j,k,\ell \in X$, 
one might view $\Bbbk G$, as the universal Hopf algebra of 
the algebra generated by the elements $\{g_{x}=t_{x}^{x}\}_{x\in X}$ satisfying the relations
$$  t_{x\trid y}^{x\trid y}t_{x}^{x} =  t_{x}^{x}t_{y}^{y}\qquad \text{ for all }x,y \in X.
$$
It is a bialgebra with the coalgebra structure determined by $t_{x}^{x}$ being group-like for all $x\in X$.

Suppose that $\B(V,c^{\bq}) $ is finite-dimensional and denote by $D \in A(c^{\bq})$ the associated 
quantum determinant. As $D$ satisfies \eqref{eq:A(c)-D}, we must have that 
$$
\delta_{i,j} f(D) = 
\sum_{k=1}^n f(t_i^k)f(T^j_k) = f(t_{i}^{i}) f(T^{j}_{i})= g_{i} f(T^{j}_{i}) \qquad \text{ for all } i,j \in X.
$$
Since $f(D),g_{i}$ are group-like elements in $\Bbbk G$, we have that $f(T^{j}_{i}) = 0$ if $i\neq j$
and $f(T^{i}_{i}) = g_{i}^{-1} f(D) \in G$ for all $ i\in X$.
From the (quantum) geometrical point of view, one may consider the universal map 
$\tilde{f}: H(c^{\bq}) \to \Bbbk G$ as the inclusion of the 
(non-commutative) subgroup of diagonal matrices into the quantum group associated with $H(c^{\bq})$. 
Compare with examples in Subsection \ref{subsec:non-injective}.
\end{rem}


\section{Examples}\label{sec:examples}
In this section we provide examples and formulas 
given by our main results.
Recall that given a set-theoretical solution of the braid equation $s:X\times X\to X\times X$, 
$s(x,y) = (g_x(y),f_y(x))$ for all $x,y\in X$,
there is a
so-called derived solution that is of rack type 
$\tau_{s}:X\times X\to X\times X$, 
that is, it is
of the form
$\tau_s(x,y)=(y,x\t_s y)$ for all $x,y\in X$ (see for instance \cite[Proposition 5.4]{AG2}
and references therein).
Now for every $n$, one can let act the braided group $\mathbb{B}_n$
using $s$ or $\tau_s$. The derived solution has the remarkable property that
there exists a bijection in $X^n$ intertwining these two possible actions. As a consequence,
for any non-zero constant cocycle $q$,
the dimensions of the homogeneous components Nichols algebras attached to $(X,qs)$
and $(X,q\tau_s)$ are the same. A very particular case is when $s^2=\id$; we also have
$\tau_s^2=\id$ and so necesarily $\tau_s$ is the flip. Hence, for {\em involutive set
 theoretical solutions}
$(X,s)$, one always have that the dimension of $\B(X,-s)$ is finite, 
and equal to the dimension of the exterior algebra. Nevertheless, the FRT-construction
on the braiding $s$ is far from being trivial:
Example \ref{ex:non-diag2x2} is the smallest example of a non-trivial
set-theoretical involutive solution, and Example \ref{ex:involutive-non-central} is also comming 
from an involutive set-theoretical solution whose quantum determinant is not central.

\subsection{A non-diagonal type  $2\times 2$ example}\label{ex:non-diag2x2}
Let $X=\{1,2\}$, and consider the set-theoretical solution of the braid equation given by 
$s:X\times X\to X\times X$
given by
\[
s(1,2)=(1,2),\qquad  s(2,1)=(2,1),\qquad
s(1,1)=(2,2),\qquad  s(2,2)=(1,1).
\]
Write 
$(t_ i^j)_{i,j}=
\left(
\begin{smallmatrix}
a&b\\
c&d\\
\end{smallmatrix}\right)$.
Then, the FRT relations \eqref{eq:FRT} are
\[
a^2 = d^2 ,\qquad
 ab = cd ,\qquad  ba = dc ,\qquad
 ac = bd ,\qquad ca = db,\qquad  
 b^2 = c^2.\]
The Nichols algebra $\B=\B(V,c)$ associated with $V=\Bbbk x\oplus \Bbbk y$ and the linearization of $c=-s$ is 
the $\Bbbk$-algebra generated 
by $x$, $y$ with relations
\[
x^2+y^2=0,\qquad 
2xy=0=2yx.
\]
If char$(\Bbbk)\neq 2$, then $\dim \B$ is finite and $\B$ has 
a basis $\{1,x,y,x^2\}$. The volume element $\frak{b}$ can be taken
to be $\frak{b}=x^2$. Since
$
\lambda (x)= a\ot x+b\ot y$, we have that 
\[
\lambda (\frak{b})=\lambda (x^2)=
( a\ot x+b\ot y)^2=a^2\ot x^2+ab\ot xy + ba\ot yx+b^2\ot y^2
=(a^2-b^2)\ot x^2.
\]
Thus, we obtain that $D:=a^2-b^2$ is a group-like element. One can check by hand that it is 
central (hence $\J=\id$) and   $\{\om_1=x,\om_2=-y\}$
is a "dual basis" with respect to the volume element $\frak b$. We compute the coaction to get the values of the $T_i^j$:
\[
\lambda (\om_1)=\lambda (x)=a\ot x+b\ot y=a\ot \om_1-b\ot \om_2,\]
\[
\lambda (\om_2)=\lambda (-y)=-c\ot x-d\ot y=-c\ot \om_1+d\ot \om_2.\]
Actually, it is quite difficult to check whether $D$ is a zero divisor or not. However,
to check the condition of Theorem \ref{teomain-new} is a 
very easy task (one can check it directly by hand or use GAP)
and
conclude that $H(s)=A[D^{-1}]=: \GL(X,-s)$ is a Hopf algebra.
The antipode is given by
\[
\SS(a)=aD^{-1},\
\SS(b)=-cD^{-1},\
\SS(c)=-bD^{-1},\
\SS(d)=dD^{-1}.
\]
Since the quantum determinant $D$ is central, we may also consider the Hopf algebra 
$\SL(X,-s) $ given by $A(s)/(D-1)$. It is the algebra presented by 
$$ \SL(X,-s) = \Bbbk\langle a,b,c,d: a^2 = d^2 ,
 ab = cd ,  ba = dc ,
 ac = bd , ca = db, 
 b^2 = c^2,\ a^{2}-b^{2}=1 \rangle. 
$$

\subsection{Involutive and non-central example}\label{ex:involutive-non-central}
For $X=\{1,2,3\}$, consider the set-theoretical solution of the braid equation given by
$s(i,i)=(i,i)$ for  $i=1,2,3$, 
$s(i,j)=(j,i)$  for  $i,j=2,3$,  and 
\[
s(1,2)=(3,1),\qquad s(1,3)=(2,1),\qquad 
s(2,1)=(1,3),\qquad s(3,1)=(1,2).
\]
Clearly, this solution is involutive.
For
$(t_ i^j)_{i,j}=
\left(
\begin{smallmatrix}
a&b&c\\
d&e&f\\
g&h&i
\end{smallmatrix}\right)$,
 the FRT relations 
 reads 
\[ 
c^2 = b^2 ,\
 g^2 = d^2 ,\
 h^2 = f^2 ,\
 i^2 = e^2 ,\]\[
 ba = ac ,\
 ca = ab ,\
 da = ag ,\
 db = cg ,\
 dc = bg ,
 ea = ai ,\]\[
 eb = ci ,\
 ec = bi ,\
 eg = di ,\
 fa = ah ,\
 fb = ch ,
 fc = bh ,\]\[
 fg = dh ,\
 fi = eh ,\
 ga = ad ,\
 gb = cd ,\
 gc = bd ,\
 gh = fd ,
 gi = ed ,\]\[
 ha = af ,\
 hb = cf ,\
 hc = bf ,\
 hd = gf ,\
 hg = df ,\
 hi = ef ,
 ia = ae ,\]\[
 ib = ce ,\
 ic = be ,\
 id = ge ,\
 if = he ,\
 ig = de ,
 ih = fe. \]
 Let $V=\Bbbk X$ and write $s$ also for the braiding given by the linearization of $s$. 
As the set-theoretical 
solution  is involutive, the Nichols algebra $\B(V,-s)$ is finite-dimensional, 
its maximal degree is $3$.
Our construction gives the quantum determinant
\[
\fbox{$
D= ae^2 - af^2 + bdf - bed - cde + cfd 
$}\]
It is group-like, normal but {\em not central}:
$D$ commutes with $a$ but  
$bD=-Dc$, $cD=-Db$, $dD=-Dg$, $gD=-Dd$, $De=iD$, $Di=eD$, $Df=hD$, $fD=Dh$.
On the other hand, these non-commutation relations give us the formula
for the automorphism $\J$:
\[
\J(a)=a,\
\J(b)=-c,\
\J(d)=-g,\
\J(e)=i,\
\J(f)=h.
\]
Also, it holds that $\J^2=\id$. One can check directly that the hypothesis of Theorem  \ref{teomain-new} holds
and conclude that $H(s)=: \GL(X,-s)$ is a Hopf algebra. We also have the explicit 
formula for the antipode: 
\[
\big(\SS(t_i^j)\big)_{ij}=
\left(
\begin{array}{ccc}
- fh + ei 
& - ce + bf 
& ch - bi 
\\
 - fg + dh 
& - cd + ae 
& cg - ah 
\\ 
eg - di 
& bd - af 
& - bg + ai 
\end{array}
\right)
D^{-1}.
\]
\begin{rem}
The relations defining the FRT-construction in this example are not very enlightning,
however,  we exhibit them  the following reasons: first,
to stress the fact that our constructions are very explicit; second, to show that  
our methods apply to every braiding comming from a set theoretical involutive solution, as 
it has a finite-dimensional 
Nichols algebra attached to it.
Also, even for very elementary solutions (e.g. a braiding coming from a set 
theoretical involutive solution on a set with 3 elements!), 
the Hopf algebras that arise in this way are non-trivial, since the quantum
 determinants in these cases are not necessarily central. And third, the number of set-theoretical involutive
 solutions on a finite set $X$ grows really fast with respect to the cardinal of $X$, so, one has
a big number of exotic examples.
\end{rem}

\subsection{Fomin-Kirillov algebras}\label{subsec:FK}
Before introducing quantum determinants for Fomin-Kirillov algebras, we first apply our construction to 
solutions of the braid equation given by a rack and a cocycle.

A \textit{rack} is a pair $(X,\rhd)$ where $X$ is a non-empty set and 
$\rhd: X \times X \to X$ is a map such that $x\rhd (y\rhd z) = (x\rhd y)\rhd (x\rhd z)$
 and $x\rhd \underline{\quad}$ is
bijective for every $x,y,z \in X$. Every rack gives a set-theoretical solution of the braid equation by setting
\[
c(i,j) = (i\rhd j, i)\qquad \text{ for all }i,j\in X.
\]
A rack 2-cocycle $\bq:X\times X \to \Bbbk^{\times}$, $(i,i)\mapsto q_{i,j}$ is a function 
such that 
\[
 q_{i, j\rhd k} q_{j,k} = q_{i\rhd j, i\rhd k}q_{i,k}\qquad \text{for all }	i,j,k \in X. 
\]
Let $(X,\rhd)$ be a rack with $|X|=n$ and let $\bq:X\times X \to \Bbbk^{\times}$ be a 
cocycle. Then, 
one may define a braiding on the vector space  
$ V=\Bbbk X$ by
$$ c^{\bq}(x_{i}\ot x_{j}) = q_{ij}x_{i\rhd j} \ot x_{i}\qquad \text{ for all }i,j\in X.$$
If we write $c(x_{i}\ot x_{j}) = \sum_{k,\ell=1}^{n} c_{i,j}^{k,\ell}x_{k} \ot x_{\ell}$, then we 
have that $$c_{i,j}^{k,\ell} = q_{ij}\delta_{i\rhd j,k}\delta_{ i,\ell}\qquad \text{ for all }i,j\in X. $$
In particular, the FRT-relations defining $A(c^{\bq})$ have the following form
\[
q_{ij}\, t_{i\rhd j}^{k}t_{i}^{\ell} =  q_{\ell,\ell\rhd^{-1}k}\, t_{i}^{\ell}t_{j}^{\ell\rhd^{-1} k}
\]
Moreover, if we replace $\ell \rhd^{-1}k$ by $k$ we get 
\[
\fbox{\fbox{$ q_{ij}\, t_{i\rhd j}^{\ell \rhd k}t_{i}^{\ell} =  q_{\ell,k}\, t_{i}^{\ell}t_{j}^{k}
$}}\]

Let $n \in \{3,4,5\}$. The Fomin-Kirillov algebras $\mathcal{E}_{n}$ arise as Nichols algebras 
when one consider the solution of the braid equation associated with
the racks given by the conjugacy classes of transpositions in $\mathbb{S}_n$ and a constant
 cocycle, see 
\cite{MS}, \cite{AG2}, \cite{GGI} for more details. 
We describe explicitly the case 
when $n=3$ and $q_{ij} = -1$ for all $i,j\in X$.

Let $X= \mO_2^{\s_{3}}$ be the rack of transpositions in $\s_{3} 
$ and 
consider the constant cocycle $q_{ij} = -1$.
Let $V=\Bbbk \mO_2^{\s_{3}}$ be the braided vector space associated with them and take the basis 
 $x_{1}= x_{(12)}, x_{2}=x_{(13)}$ and $x_{3}=x_{(23)}$ on $V$. Then  
$ c(x_{i}\ot x_{j}) = -x_{i\rhd j} \ot x_{i}$ for all $1\leq i,j\leq 3$.
In this case, 
$A(c^{\bq})$ is generated by the elements $\{t_{i}^{j}\}_{1\leq i,j\leq 3}$ satisfying the 
relations  
$ t_{i\rhd j}^{k}t_{i}^{\ell} = t_{i}^{\ell}t_{j}^{\ell\rhd k}$ for all $1\leq i,j,k,\ell\leq 3$.
Because the cocycle is constant
 we 
may write
\begin{equation}\label{eq:commrelA}
\fbox{\fbox{$ t_{i\rhd j}^{\ell\rhd k}t_{i}^{\ell} = t_{i}^{\ell}t_{j}^{k}
$}}
 \end{equation}
The Nichols algebra $\B(\mO_2^{\s_{3}},-1)$ associated with this rack and cocycle is 
finite-dimensional and it is generated by the elements $x_{1},x_{2},x_{3}$ satisfying the
relations 
\begin{equation}\label{eq:rel-fk}
x_{i}^2=0,\qquad x_{1}x_{2}+x_{2} x_{3}+x_{3} x_{1} = 0,\qquad
 x_{1}x_{3}+x_{3} x_{2}+x_{2} x_{1}=0, 
\end{equation}
for all $1\leq i \leq 3$. 
It has dimension 12 and its volume element is in degree $N=4$.
 In our case, 
we may take  $\B^{4}=\Bbbk x_{1}x_{2}x_{3}x_{2}$ and the volume element $\b=x_{1}x_{2}x_{3}x_{2}$. 
In particular, by Remark \ref{rmk:Rqconst}, we have that 
$r(D,t_{i}^{j}) = \delta_{i,j}$ for all $1\leq i,j \leq 3$.
Using  
relations \eqref{eq:rel-fk} one may prove that
 \begin{align*}
\label{eq:topdegreefk}
  &x_{1}x_{2}x_{1}x_{2} = 0, & 
  &x_{1}x_{2}x_{1}x_{3} = -\b,  &  
  &x_{1}x_{2}x_{3}x_{1} = 0,&
  &x_{1}x_{3}x_{1}x_{2} = -\b,\\
  &x_{1}x_{3}x_{1}x_{3} = 0, &   
  &x_{1}x_{3}x_{2}x_{1} = 0,  & 
  &x_{1}x_{3}x_{2}x_{3} = \b,&
  &x_{2}x_{1}x_{2}x_{1} = 0,\\ 
  &x_{2}x_{1}x_{2}x_{3} = -\b, &  
  &x_{2}x_{1}x_{3}x_{1} = \b,  &  
  &x_{2}x_{1}x_{3}x_{2} = 0,&
  &x_{2}x_{3}x_{1}x_{2} = 0,\\ 
  &x_{2}x_{3}x_{1}x_{3} = \b, &   
  &x_{2}x_{3}x_{2}x_{1} = -\b,  &  
  &x_{2}x_{3}x_{2}x_{3} = 0,&
  &x_{3}x_{1}x_{2}x_{1} = \b,\\ 
  &x_{3}x_{1}x_{2}x_{3} = 0, &   
  &x_{3}x_{1}x_{3}x_{1} = 0,  &  
  &x_{3}x_{1}x_{3}x_{2} = -\b,&
  &x_{3}x_{2}x_{1}x_{2} = \b,\\ 
  &x_{3}x_{2}x_{1}x_{3} = 0, &   
  &x_{3}x_{2}x_{3}x_{1} = -\b,  &  
  &x_{3}x_{2}x_{3}x_{2} = 0.&
   & 
\end{align*}
For example, since $x_{1}x_{2}+x_{2} x_{3}+x_{3} x_{1}=0$ and $x_{i}^{2}=0$,	 it follows that
$$x_{2}x_{1}x_{2}x_{1} =0,\qquad x_{1}x_{2}x_{3}x_{1}=0,\qquad x_{2}x_{1}x_{2}x_{1}=0,
\qquad x_{2}x_{3}x_{1}x_{2}=0.
$$
On the other hand, $x_{1}x_{2}x_{3}x_{2} + x_{1}x_{3}x_{1}x_{2}=0$, which implies 
that $ x_{1}x_{3}x_{1}x_{2}=-x_{1}x_{2}x_{3}x_{2} = -\b$. Moreover, for all 
$1\leq r\leq 3$ we have that 
$ x_{r\rhd 1}x_{r\rhd 2}x_{r\rhd 3}x_{r\rhd 2}=x_{1}x_{2}x_{3}x_{2}$.
Writing 
$
(t_i^j)_{i,j=1,2,3}=\left(\begin{smallmatrix}
a&b&c\\
d&e&f\\
g&h&i
\end{smallmatrix}
\right)$,
the relations in $A(c)$ read
\[
ba = ac = cb ,
bc = ab = ca ,
da = ag = gd ,
db = bi = id ,\]\[
dc = ch = hd ,
dg = ad = ga ,
dh = cd = hc ,
di = bd = ib ,\]\[
ea = ai = ie ,
eb = bh = he ,
ec = cg = ge ,
ed = df = fe ,\]\[
ef = de = fd ,
eg = ce = gc ,
eh = be = hb ,
ei = ae = ia ,\]\[
fa = ah = hf ,
fb = bg = gf ,
fc = ci = if ,
fg = bf = gb ,\]\[
fh = af = ha ,
fi = cf = ic ,
hg = gi = ih ,
hi = gh = ig .\]
Thus, the quantum determinant is given by  
\[
\fbox{$
\begin{array}{rcl}
D&=&a^2e^2 - abdf + a^2f^2 - abgi + b^2d^2 - abgi \\
&&- abdf + b^2f^2 - abdf + c^2d^2 - abgi + c^2e^2\\
&=&c^2e^2 + c^2d^2 + b^2f^2 + b^2d^2 - 3abgi - 3abdf + a^2f^2 + a^2e^2.
\end{array}$}\]
One can check explicitly by hand using the relations above (or using GAP and non-commutative Gr\"obner basis)
that $D$ is a central element; in particular, the hypothesis
of Theorem \ref{teomain-new} holds. Thus,
$H(c)=: \GL(\mO_2^{\s_{3}},-1)$ is a Hopf algebra.
The formula for the antipode follows from considering dual bases in the Nichols algebra
and finding the elements $T_i^j$; this can be done explicitly.
For example,
\[
\SS(a)=
(- fbi + fah - ech + eai )D^{-1}.
\]
As $D$ is a central group-like element, one may also define the Hopf algebra 
$\SL(\mO_2^{\s_{3}},-1) $ given by $A(c)/(D-1)$.

We end this example with a question suggested by the referee.
\begin{question}
The example above relies heavily on computations. Following our construction, it is
 possible to describe the 
quantum function algebras $H(c)= \GL(\mO_2^{\s_{n}},-1)$ for $n=4$ or $n=5$. 
However, our methods  need computer assistance  since the dimension
of the Nichols algebra for $n=4$ is 576 and its top degree is 12, while
for $n=5$ the dimension
of $\B$ is 8294400 and its top degree is 40.
It would be
 interesting to present
$H(c)= \GL(\mO_2^{\s_{n}},-1)$ in a more conceptual way, since this algebra would give some insight
on the Fomin-Kirillov algebras.
 \end{question}

\subsection{Quantum determinants for quantum planes}
In \cite{AJG}, the authors consider all solution of the QYBE in dimension $2$ and give several
examples of finite-dimensional Nichols algebras, arranged in families
$\fR_{0,1}$,
$\fR_{1,i}$ ($i=1,2,3,4$), and $\fR_{2,i}$ ($i=1,2,3$). They remark that, up to now,
the only case known
where one can find a quantum determinant and localize $A(c)$ to obtain a
Hopf algebra is $\fR_{2,1}$, due to a result of Takeuchi \cite{T1}.
As our method only has as hypothesis $\dim\B<\infty$, we can apply it in the other cases (and for certain parameters) 
to obtain quantum determinants. 

For example, let us consider the case $\fR_{2,2}$ (with $k^{2}=-1 $ and $pq=1$, 
according to the notation in \cite{AJG}). 
The braiding associated
with this two-dimensional vector space $V_{k,p,q}$  is  
$$ \big(c(x_{i}\ot x_{j})\big) _{1\leq i,j\leq 2} = \left(\begin{matrix}
                                           -x_{1}\ot x_{1} & kq\ x_{2}\ot x_{1} -2\ x_{1}\ot x_{2}\\
                                           kp\ x_{1} \ot x_{2} & -x_{2} \ot x_{2}
                                          \end{matrix}\right).$$
The corresponding Nichols algebra is presented as follows 
$$\B(V_{k,p,q}) = T(V_{k,p,q}) / (x_{1}x_{2} -kq\ x_{2}x_{1}, x_{1}^{2}, x_{2}^{2} ).
$$
A PBW-basis is given by  $\{1, x_{1}, x_{2}, x_{1}x_{2}\}$, $\dim \B(V_{k,p,q}) = 4$ 
and one may take the volume element 
$\b= x_{1}x_{2}$.
Write
$(t_i^j)_{i,j=1,2}=\left( \begin{smallmatrix}
a&b\\
c& d
 \end{smallmatrix}\right)$. Then
the FRT relations read
$$ ab=kp\ ba,\qquad ac = kq\ ca, \qquad bc=q^{2}\ cb,\qquad
ad-da=kp\ bc,\qquad  cd = kp\ dc,\qquad bd=kq\ db.
$$
The quantum determinant is given by  
$
D=ad - kp\ bc$. This element is {\em not} central, it verifies
 $aD=Da$, $dD=Dd$,  
but $Db=p^2bD$ and $Dc=q^2cD$. Hence
\[\J(a)=a,\qquad \J(d)=d,\qquad
\J(b)=p^2b,\qquad \J(c)=q^2c.\]
The matrices $T$ and $\J(T)$ are
\[
T=(T_i^j)=\left(
\begin{array}{cc}
d&kqb\\
-kpc&a
\end{array}
\right),\hskip 1cm 
\J (T)=(\J(T_i^j))=\left(
\begin{array}{cc}
d&kpb\\
-kqc&a
\end{array}
\right)\]
One can easily check that
$\frak{t}\cdot T=D\cdot \id=\J(T)\cdot \frak{t}$,
where
$\frak{t}=(t_i^j)=\left(
\begin{smallmatrix}
a&b\\
c&d\\
\end{smallmatrix}
\right)$. 

\subsection{A non-quadratic Nichols algebra \label{ejchino}}
Another feature of \cite{AJG} is the presentation by generators and relations of families of   
Nichols algebras having quadratic relations.
In \cite{xiong}, the author presents another example over a quantum plane considered in \textit{loc.~cit.~}, but 
with no quadratic relations. It is a finite-dimensional Nichols algebra with all relations of
order bigger than $2$. 
As example, we compute the quantum determinant for the Nichols algebra associated with the quantum plane $V_{4,1}$.

Let $\xi$ be a primitive 6-root of unity and write $\xi=-\om$, with $\om$ a primitive 3-root of 1.
Let  $\B$ be the $\Bbbk$-algebra generated by $x$ and $y$ with relations
\[
x^3= 0,\qquad
y^3-x^2y-yx^2+xyx= 0,\qquad
y^2x+xy^2-yxy= 0,\qquad
\xi x^2y+\xi^{5}yx^2+xyx= 0.
\]
The volume element  is $\b= x^2yxy^2$,
and the quantum determinant for
$(t_i^j)_{i,j}=
\left(
\begin{array}{cc}
a&b\\
c&d\\
\end{array}\right)$, is
 \[
D=(-\om +\om ^2)b^2dbdc +( -2\om -\om ^2)b^2dbcd + (-\om -2\om ^2)b^2dad^2
 + (\om -\om ^2)b^2cbd^2 + \om b^2cbc^2 \]\[
\qquad + \om ^2b^2cadc 
+( 2\om +\om ^2)
badbd^2 + \om badbc^2 - badacd  -\om bacbdc 
-\om bacbcd  
+(\om +2\om^2)abdbd^2\]\[
 -\om ^2abdadc - abdacd  -\om ^2abcbdc + \om ^2abcad^2 + a^2dbcd + a^2dad^2.
\]

\subsection{Two examples of the non-injectiveness of the canonical map $\iota: A(c)\to H(c)$}
\label{subsec:non-injective}
\subsubsection{Commutative example}\label{EjemploPeter}
We thank Peter Schauenburg for providing us this example, together with an argument. Nevertheless,
we exhibit a different argument to show that the canonical map is not injective.
Consider the vector space 
$V=\Bbbk x\oplus \Bbbk y$ with the following braiding
 \[
c(x\ot x)=x\ot x,\qquad
c(x\ot y)=y\ot x,\qquad
c(y\ot x)=x\ot y,\qquad
c(y\ot y)=2y\ot y.
\]
The FRT relations of $A(c)$ are
\[
ba - ab =0,\
 b^2 =0,\
 bd =0,\
 ca - ac =0,\
 cb - bc =0,\]\[
 c^2 =0,\
 cd =0,\
 da - ad= 0,\
 db - 2bd= 0,\
 dc - 2cd =0,
\]
which are equivalent to
\[ab=ba,\qquad ac=ca,\qquad ad=da,\qquad  bc=cb,\qquad
0=bd=db=cd=dc = b^2 =c^2.\]
Thus, $A(c) = A=k[a,b,c,d]/(b^2,c^2,bd,cd)$ is a commutative bialgebra. Assume $\Bbbk$ is algebraically closed of characteristic zero.
Then,
it is clear that $A$ cannot inject into a Hopf
algebra because it has nilpotent elements. Also, a direct proof 
(valid on any characteristic) in this particular 
case can be given as follows:
the bialgebra $A$ is a quotient of $\Bbbk[a,b,c,d]=\O(\Mq_2)$, thus
$D:=ad-bc$ is a group-like element in $A$. Notice that $\Lambda V$ is not
the Nichols algebra of $(V,qc)$ for any $q\in\Bbbk^{\times}$, because the former is
 finite-dimensional and 
the latter infinite-dimensional, but
nevertheless it is a weakly graded-Frobenius algebra for $A$.
Since
$b^2=0=bd$, it follows that  
$bD=0$ and $cD=0$.
Thus, $D$ is a zero divisor. If there is a bialgebra  map
$f:A\to K$  with $K$ a  Hopf algebra, then $f(D)$ is invertible and
so $f(b)=0$.

 Also, being $A$ commutative, we have that  
$\J=\id$, and the left inverse of a matrix with commuting entries is the same as a right
inverse, so hypothesis of Theorem \ref{teomain-new} are fullfilled. Hence,
$H(c)$ is a Hopf algebra. In this concrete example,
we see clearly that $b$ and $c$ are killed when inverting $D$, and we get the isomorphism
\[
H(c)\cong \frac {A}{(b=c=0)}[(ad-bc)^{-1}]
=\Bbbk[a,d][(ad)^{-1}]=\Bbbk[a^{\pm 1},d^{\pm 1}],
\]
with $\Delta (a)=a\ot a$ and $\Delta (d)=d\ot d$, so 
$H(c)\cong \Bbbk[\Z\times \Z]$.

\subsubsection{Quantum linear spaces}
We end the paper with another example that the canonical map is not necessarily injective. From our calculations one 
obtains another
proof, under certain hypothesis on the braiding, 
of the very well-known fact that a Nichols algebra associated with a quantum linear space
of dimension $n$
is realizable as a braided Hopf algebra in the category of $\Bbbk[\Z^{n}]$-comodules.

Let $V$ be a finite-dimensional vector space with basis $\{x_{i}\}_{1\leq i\leq n}$ and consider the   
following \textit{diagonal} braiding on it:
\[
c(x_i\ot x_j)=q_{ij}x_j\ot x_i \qquad \text{ for all }1\leq i,j\leq n,
\]
 where $q_{ij}\in\Bbbk^\times $ satisfy that $q_{ij}q_{ji}=1$ for $i\neq j$ and 
 $q_{ii}$ are primitive 
$N_i$-roots of 1, with $N_{i} \in \N$ and $N_{i}>1$. Then the Nichols algebra $\B(V,c)$ has generators
$x_1,\dots,x_n$ and relations
\[
x_ix_j=q_{ij}x_jx_i,\hskip 1cm x_i^{N_ i}=0.
\]
In particular, $\dim\B(V,c)=\prod_{i=1}^n N_i$, and a volume element is given by 
$\b=x_1^{N_1-1}\cdots x_n^{N_n-1}$.
Notice that all $N_i$ may be different, and not necessarily equal to 2. That is,
this is a family of  non-quadratic, non-homogeneous algebras.

Recall that, for a braiding $c$ of diagonal type, the FRT relations of $A(c)$ are given by
\[
q_{k\ell}t_i^kt_ j^\ell=
q_{ij}t_j^\ell t_i^k \qquad \text{ for all }1\leq i,j\leq n.
\]
This implies in particular that $A(c)$ is non-commutative if $q_{k\ell}\neq q_{ij}$.
Besides, it holds that 
$q_{\ell k}t_j^\ell t_ i^k=
q_{ji}t_i^k t_j^\ell
$ and consequently
\[
t_i^kt_ j^\ell=
q_{k\ell}^{-1}q_{ij}t_j^\ell t_i^k=
q_{k\ell}^{-1}q_{ij}
q_{\ell k}^{-1}
q_{ji}t_i^k t_j^\ell.
\]
By our assumptions on the braiding, this is nothing else that $t_i^kt_ j^\ell=t_i^kt_ j^\ell$ 
if $i\neq j$ and $k\neq \ell$. On the other hand, if $i=j$, we get
\[
t_i^kt_ i^\ell=
q_{k\ell}^{-1}q_{ii}
q_{\ell k}^{-1}
q_{ii}t_i^k t_i^\ell.
\]
Thus, for $k\neq \ell$ we obtain that 
$t_i^kt_ i^\ell=
q_{ii}^2t_i^k t_i^\ell$. If moreover $N_{i}\neq 2$, it holds that  $t_i^kt_ i^\ell=0$. 
Similarly, we have 
that 
\begin{align*}
t_i^kt_ j^k & =
q_{ kk}^{-2}
t_i^k t_j^k
& \text{ for }i\neq j \text{ and }k=\ell, \\ 
(t_i^k)^2 & =
q_{kk}^{-2}q_{ii}^2 (t_i^k )^2 & \text{ 
 for }i=j \text{ and }k=\ell. 
\end{align*}
From the considerations above we get the following lemma: 
\begin{lem}\label{lem:conditions-diagonal} Under the assumptions above, for $i\neq j$ we have:
\begin{enumerate}
 \item[$(a)$] If $q_{jj}^{-2}q_{ii}^2\neq 1$ then 
$(t_i^j)^2=0$.
\item[$(b)$] If $N_k\neq 2 $, then $t_i^kt_ j^k= 0 =t_k^it_k^j$.
\end{enumerate}
\end{lem}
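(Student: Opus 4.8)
The plan is to obtain both assertions as immediate consequences of the symmetrised FRT relation already derived in the paragraph just above the statement. For the diagonal braiding the FRT relations read $q_{k\ell}\,t_i^kt_j^\ell=q_{ij}\,t_j^\ell t_i^k$; applying this identity twice (the second time with the roles of $(i,k)$ and $(j,\ell)$ exchanged) yields
\[
t_i^kt_j^\ell=(q_{k\ell}q_{\ell k})^{-1}(q_{ij}q_{ji})\,t_i^kt_j^\ell
\]
for all $1\le i,j,k,\ell\le n$. This single identity is the only ingredient I would use; everything else is a matter of specialising indices and invoking the hypotheses on the scalars $q_{ij}$.

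For part $(a)$ I would specialise to the configuration in which both the lower indices and the upper indices coincide, i.e.\ evaluate the identity on the product $t_i^j\cdot t_i^j$. The scalar factor then equals $(q_{jj}q_{jj})^{-1}(q_{ii}q_{ii})=q_{jj}^{-2}q_{ii}^{2}$, so $(t_i^j)^2=q_{jj}^{-2}q_{ii}^{2}(t_i^j)^2$, that is $\big(1-q_{jj}^{-2}q_{ii}^{2}\big)(t_i^j)^2=0$. Since by hypothesis $q_{jj}^{-2}q_{ii}^{2}\ne1$, the scalar $1-q_{jj}^{-2}q_{ii}^{2}$ is invertible in $\Bbbk$ and we conclude $(t_i^j)^2=0$.

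For part $(b)$ I would use the two remaining boundary configurations, always with $i\ne j$. Evaluating the identity on $t_i^k\cdot t_j^k$ (coinciding upper indices) produces the scalar $(q_{kk}q_{kk})^{-1}(q_{ij}q_{ji})=q_{kk}^{-2}$, using $q_{ij}q_{ji}=1$, which holds by the cocycle condition precisely because $i\ne j$; evaluating it on $t_k^i\cdot t_k^j$ (coinciding lower indices) produces the scalar $(q_{ij}q_{ji})^{-1}(q_{kk}q_{kk})=q_{kk}^{2}$. Hence $t_i^kt_j^k=q_{kk}^{-2}\,t_i^kt_j^k$ and $t_k^it_k^j=q_{kk}^{2}\,t_k^it_k^j$. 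Because $q_{kk}$ is a primitive $N_k$-th root of unity with $N_k>1$ and $N_k\ne2$, we have $N_k\ge3$, hence $q_{kk}^{2}\ne1$; therefore both $1-q_{kk}^{-2}$ and $1-q_{kk}^{2}$ are nonzero scalars, forcing $t_i^kt_j^k=0$ and $t_k^it_k^j=0$.

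I do not expect a genuine obstacle: the whole statement reduces to three specialisations of one scalar identity together with the elementary fact that a primitive $N_k$-th root of unity with $N_k\ge3$ is never a square root of $1$. The only point needing a little care is the bookkeeping of which pair of indices contributes a factor $q_{\bullet\bullet}^{2}$ (when the two indices in the pair agree) and which contributes the factor $1$ (when they differ, via $q_{ij}q_{ji}=1$).
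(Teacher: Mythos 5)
Your proposal is correct and follows essentially the same route as the paper: both derive the scalar identity $t_i^kt_j^\ell=(q_{k\ell}q_{\ell k})^{-1}(q_{ij}q_{ji})\,t_i^kt_j^\ell$ by applying the diagonal FRT relation twice, then specialise the indices and use that $q_{kk}$ is a primitive $N_k$-th root of unity with $N_k\geq 3$. The only nitpick is that $q_{ij}q_{ji}=1$ for $i\neq j$ is a standing hypothesis on the braiding in this subsection rather than a consequence of a cocycle condition, but this does not affect the argument.
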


\begin{coro} Assume that for $i\neq j$ it holds that $q_{jj}^{-2}q_{ii}^2\neq 1$ and 
$N_k\neq 2 $ for all $1\leq k\leq n$. Then 
\begin{enumerate}
\item[$(a)$] The quantum determinant is $D=(t_{1}^{1})^{N_{1}-1}(t_{2}^{2})^{N_{2}-1}\cdots (t_{n}^{n})^{N_{n}-1}$.
\item[$(b)$] $t_{i}^{j}=0$ for all $i\neq j$, and $t_k^k$ is group-like for all $1\leq k \leq n$ as elements in $H(c)$.
\item[$(c)$] $H(c)\cong \Bbbk[(t_1^1)^{\pm1},(t_2^2)^{\pm1},\dots,(t_n^n)^{\pm1}]\simeq \Bbbk [\Z^{n}]$.
In particular,
the canonical map is not injective if $q_{k\ell}\neq q_{ij}$ for some $1\leq i,j,k,\ell\leq n$.
\end{enumerate}
\end{coro}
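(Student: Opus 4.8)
The plan is to prove the three assertions in turn, reducing everything to the FRT relations of $A(c)$ listed above together with Lemma~\ref{lem:conditions-diagonal}. For part $(a)$ I would compute $\lambda(\b)$ directly. Since $\B=\B(V,c)$ is an $A(c)$-comodule algebra and $\b=x_1^{N_1-1}\cdots x_n^{N_n-1}$, we have $\lambda(\b)=\lambda(x_1)^{N_1-1}\cdots\lambda(x_n)^{N_n-1}$ in $A(c)\ot\B$, where $\lambda(x_k)=\sum_j t_k^j\ot x_j$. Expanding, the $t$-part of a term is a product $\prod_k\bigl(t_k^{c^{(k)}_1}\cdots t_k^{c^{(k)}_{N_k-1}}\bigr)$, consisting of one block of $N_k-1\geq 2$ factors with lower index $k$ for each $k$. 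Inside the $k$-th block, Lemma~\ref{lem:conditions-diagonal}$(b)$ gives $t_k^c t_k^{c'}=0$ whenever $c\neq c'$ (using $N_k\neq 2$), and Lemma~\ref{lem:conditions-diagonal}$(a)$ gives $(t_k^c)^2=0$ whenever $c\neq k$ (using the hypothesis on the $q$'s); hence the block survives only when all its upper indices equal $k$, in which case it equals $(t_k^k)^{N_k-1}$ with $x$-factor $x_k^{N_k-1}$. Thus the only surviving term is $\bigl(\prod_k(t_k^k)^{N_k-1}\bigr)\ot\b$, and comparing with $\lambda(\b)=D\ot\b$ gives $D=(t_1^1)^{N_1-1}\cdots(t_n^n)^{N_n-1}$.

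For part $(b)$, fix $i\neq j$; I claim $t_i^j D=0$ in $A(c)$. For every index $k\notin\{i,j\}$ the FRT relation reads $t_i^j t_k^k=q_{jk}^{-1}q_{ik}\,t_k^k t_i^j$, so $t_i^j$ can be pushed rightward through the blocks of $D=(t_1^1)^{N_1-1}\cdots(t_n^n)^{N_n-1}$ at the cost of scalars, until it reaches the first block whose index lies in $\{i,j\}$; there $t_i^j t_i^i=0$ (same lower index) or $t_i^j t_j^j=0$ (same upper index) by Lemma~\ref{lem:conditions-diagonal}$(b)$, so the whole product vanishes. Since $D$ is invertible in $H(c)$, this forces $t_i^j=0$ in $H(c)$ for all $i\neq j$. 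Consequently the only FRT relations surviving in $H(c)$ are $t_i^i t_j^j=t_j^j t_i^i$, so $H(c)$ is generated by the pairwise commuting units $t_k^k$ together with $D^{-1}$; as $D^{-1}$ commutes with each $t_k^k$ (because $D$ does), $H(c)$ is commutative, whence $D=\prod_k(t_k^k)^{N_k-1}$ being a unit forces each $t_k^k$ to be a unit, and $\Delta(t_k^k)=\sum_\ell t_k^\ell\ot t_\ell^k=t_k^k\ot t_k^k$ shows each $t_k^k$ is group-like.

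For part $(c)$, I would exhibit mutually inverse bialgebra maps with $\Bbbk[z_1^{\pm1},\dots,z_n^{\pm1}]\cong\Bbbk[\Z^n]$. The assignment $z_k\mapsto t_k^k$ defines $\phi\colon\Bbbk[z_1^{\pm1},\dots,z_n^{\pm1}]\to H(c)$ by part $(b)$, and $\phi$ is onto since $t_i^j=0$ for $i\neq j$ and $D^{-1}=\prod_k(t_k^k)^{-(N_k-1)}$, while $\{t_i^j\}\cup\{D^{-1}\}$ generates $H(c)$. Conversely, $t_k^k\mapsto z_k$ and $t_i^j\mapsto 0$ ($i\neq j$) is compatible with the FRT relations (each becomes $0=0$ or the commutativity of the $z_k$), hence defines an algebra map on $A(c)$ sending $D$ to the unit $\prod_k z_k^{N_k-1}$; by the universal property of the localization (Definition~\ref{def:localization}) it extends to a bialgebra map $\bar\psi\colon H(c)\to\Bbbk[z_1^{\pm1},\dots,z_n^{\pm1}]$, which is visibly a two-sided inverse of $\phi$ on generators. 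Hence $H(c)\cong\Bbbk[\Z^n]$. Finally, when $q_{k\ell}\neq q_{ij}$ for some indices, $A(c)$ is noncommutative (as already observed above) whereas $\Bbbk[\Z^n]$ is commutative, so the canonical map $\iota\colon A(c)\to H(c)$ cannot be injective.

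I expect the argument to be essentially bookkeeping; the two points requiring the most care are matching the two separate hypotheses (the condition $N_k\neq 2$ versus the condition $q_{jj}^{-2}q_{ii}^2\neq 1$) against the precise vanishings invoked in part $(a)$, and verifying in part $(b)$ that $t_i^j$ genuinely scalar-commutes past every block of $D$ with index outside $\{i,j\}$, so that it is forced to meet block $i$ or block $j$ and die there.
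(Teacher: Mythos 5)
Your proposal is correct and takes essentially the same approach as the paper: part $(a)$ is the same block-by-block vanishing computation via Lemma \ref{lem:conditions-diagonal}, part $(b)$ kills the off-diagonal $t_i^j$ by pairing them against the invertible $D$ (the paper first inverts each $t_k^k$ and uses $t_i^kt_k^k=0$, while you show $t_i^jD=0$ directly in $A(c)$ — a cosmetic difference), and part $(c)$ is the same identification with $\Bbbk[\Z^n]$, with the non-injectivity deduced, as in the paper, from commutativity of the target.
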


\begin{proof}
$(a)$ The claim follows by a direct computation. Indeed, by Lemma \ref{lem:conditions-diagonal},  for $2\leq \ell \leq N_{k}-1$ one has that  
\[\lambda (x_k^\ell)=
\sum_{i_{1},\ldots, i_{\ell}}(t_k^{i_{1}}t_k^{i_{2}}\cdots t_k^{i_{\ell}})\ot x_{i_{1}}x_{i_{2}}\cdots x_{i_{\ell}}=
(t_k^k)^{\ell}\ot x_k^\ell.
\]
In particular, this implies that $(t_k^k)^{\ell}$ is a group-like element for all $1\leq k\leq n$ and $2\leq \ell \leq N_{k}-1$.
Hence, $\lambda(x_1^{N_1-1}\cdots x_n^{N_n-1}) = (t_{1}^{1})^{N_{1}-1}(t_{2}^{2})^{N_{2}-1}\ot x_1^{N_1-1}\cdots x_n^{N_n-1}$ and
the assertion is proved.

$(b)$ Since $q_{k\ell}t_i^kt_ j^\ell=
q_{ij}t_j^\ell t_i^k$,
it follows that
$q_{ij}t_i^it_ j^j= 
q_{ij}t_j^j t_i^i$, which implies that $t_i^it_ j^j=
t_j^j t_i^i$ for all $1\leq i,j\leq n$. Thus, for all $1\leq k \leq n$ we may write $D=(t_{k}^{k})^{N_{k}-1}D'$ for some $D'\in A(c)$. Since 
$D$ is invertible in $H(c)$ and $N_{k}>1$, we have that $t_{k}^{k}$ is a unit in $H(c)$. 
On the other hand, as $t_i^kt_j^k=0$ for $i\neq j$, it follows that
$t_i^kt_k^k=0$ for $i\neq k$, from which follows that $t_i^k=0$ for $i\neq k$.
The last claim follows from the very definition of the comultiplication in $H(c)$.

$(c)$ From the considerations above, we obtain
\[
H(c)=\frac{A}{(t_i^j:i\neq j)}[D^{-1}]
=\Bbbk[t_1^1,t_2^2,\dots,t_n^n][D^{-1}]=
\Bbbk[(t_1^1)^{\pm 1},(t_2^2)^{\pm 1},\dots,(t_n^n)^{\pm 1}]
\cong \Bbbk[\Z ^n].
\]
\end{proof}

\Addresses

\end{document}